\documentclass[12pt]{amsart}
\usepackage[english]{babel}
\usepackage{amsmath,amsthm,amssymb,amsfonts}

\DeclareMathOperator{\supp}{\mathrm{supp}}

\newcommand{\N}{\mathbb{N}}
\newcommand{\R}{\mathbb{R}}
\newcommand{\C}{\mathbb{C}}

\DeclareMathOperator*{\esssup}{\mathrm{ess\,sup}}

\textheight=22.8cm
\textwidth = 6.46 true in
\marginparsep=0cm
\oddsidemargin=0.0cm
\evensidemargin=0.0cm
\headheight=13pt
\headsep=0.8cm
\parskip=0pt
\hfuzz=6pt
\widowpenalty=10000
\setlength{\topmargin}{-0.6cm}

\newtheorem{theorem}{Theorem}[section]
\newtheorem{proposition}[theorem]{Proposition}
\newtheorem{coro}[theorem]{Corollary}
\newtheorem{lemma}[theorem]{Lemma}

\begin{document}


\title[Sharp spectral multipliers for a new class of  Grushin type operators]
{Sharp spectral multipliers for a new class of  Grushin type  operators }

\author{Peng Chen}
\address{Peng Chen \\ Department of Mathematics \\ Macquarie University \\ NSW 2109 \\ Australia}
\email{achenpeng1981@163.com}

\author{Adam Sikora}
\address{Adam Sikora \\ Department of Mathematics \\ Macquarie University \\ NSW 2109 \\ Australia}
\email{sikora@maths.mq.edu.au}

\begin{abstract}  

We describe  weighted Plancherel estimates  and sharp  Hebisch-M\"uller-Stein type spectral multiplier 
result 
for a new class of Grushin  type operators. We also discuss the optimal exponent for Bochner-Riesz 
summability in this setting. 
\end{abstract}

\maketitle

\section{Introduction}

On the space $L^2(\R^{d_1}\times\R^{d_2}) $ with the standard Lebesgue
measure consider a class of Grushin type operators defined by the formula 
\begin{equation}\label{eq1}
L_\sigma=-\sum_{{{j}}=1}^{d_1}\partial_{x'_{{j}}}^2 -
\left(\sum_{{{j}}=1}^{d_1}|x'_{{j}}|^\sigma \right)
\sum_{{{k}}=1}^{d_2}\partial_{x''_{{k}}}^2
\end{equation}
where exponent $\sigma >0$. In the case $\sigma=2$, the spectral properties of these operators 
were studied by A. Martini and the second author  in  \cite{MaS12} where 
  sharp spectral multiplier  and optimal  Bochner-Riesz summability results were obtained.   The aim of this paper 
  is to obtain analogous results for the class of Grushin operators corresponding to the exponent $\sigma =1$. 
The general strategy of the proof of the sharp spectral multiplier result for  $\sigma =1$ is the same as  one 
described in \cite{MaS12} for $\sigma =2$. However, the proofs of two most crucial estimates (Proposition \ref{prop2.2}
and Lemma \ref{lem3.4} below) are new and technically significantly more difficult.
The spectral decompositions of the operators $L_2$ and $L_1$  are essentially  different.
We use results derived in  \cite{G} to obtain a description of
the spectral decomposition of the operator $L_1$  necessary  for the proof of Proposition \ref{prop2.2}
and Lemma \ref{lem3.4}.

 The closure of  operator $L_\sigma$, $\sigma >0$  initially defined on $C^\infty_c(\R^{d_1}\times\R^{d_2})$ is a non-negative self-adjoint operator and it admits a spectral resolution
$E_{L_\sigma}(\lambda)$ for all $\lambda \ge 0$, see \cite{RS}. By spectral theorem
for every
bounded Borel function $F : \R \to \C$, one can define the operator
$$
F(L_\sigma) = \int_\R F(\lambda) \,dE_{L_\sigma}(\lambda)
$$
which is bounded on $L^2(\R^{d_1}\times\R^{d_2})$. This paper is devoted to  spectral
multipliers  that is we investigate sufficient conditions on function $F$ under
which the operator $F(L_1)$ extends to bounded operator acting on spaces
$L^p(\R^{d_1}\times\R^{d_2})$ for some range of $p$. We also study closely related 
question of critical exponent $\kappa$ for which the Bochner-Riesz means $(1-t
L_1)_+^\kappa$ are bounded on $L^p(\R^{d_1}\times\R^{d_2})$ uniformly in $t \in [
0,\infty)$. In the sequel we shall only discuss the Grushin operator $L_1$ which for simplicity 
we denote just by $L$.

The motivation and rationale for spectral multiplier results of the type, which we consider here 
as well as relevant literature and earlier related multiplier results 
were described in details in the introduction to \cite{MaS12} and we refer readers to this paper 
for  in depth discussion. Here  we only want to briefly 
mention that the theory of spectral multipliers and Bochner-Riesz analysis are central part of 
harmonic analysis which have attracted a huge amount of attention,
see for example \cite{CS, DOS, heb93, MS94, SW} and references within. 
 One especially intriguing and   surprising  direction in the 
theory of spectral multipliers is devoted to investigation of sharp results for  sub-elliptic or degenerate 
operators. The main idea in this area is that the sharp results are expected to be determined by the Euclidean 
dimension of underling ambient space rather than the homogeneous dimension of  the space and corresponding 
heat semigroup. This part of  spectral multipliers theory was initiated by results obtained by 
W. Hebisch \cite{heb93} and D. M\"uller and E.M. Stein \cite{MS94}. Other examples of papers devoted to 
 sharp spectral multipliers  for  sub-elliptic or degenerate 
operators include  \cite{CP, CKS, CS, HZ, JST}.

\bigskip

Our two main results,  the sharp spectral multiplier and the corresponding optimal results for 
convergence of Bchner-Riesz means, are stated in Theorems~\ref{thm1m} and \ref{thm2r}
below. 
 Let $\eta$ be a non-trivial $C_c^\infty$ function with compact
support on $\R_+$. For function $F : \R \to \C$ we define $\delta_t F(x)=F(tx)$ and
set $D=\max\{d_1+d_2,3d_2/2\}$. By $W_2^s$ we denote $L^2$ Sobolev space that is 
$ \| F \|_{W_2^s}= \| (I-d_x^2)^{s/2}F  \|_2$.

\begin{theorem}\label{thm1m}
Suppose that function $F : \R \to \C$ satisfies
\begin{equation*}
\sup_{t>0} \|\eta \, \delta_t F \|_{W_2^s}  < \infty
\end{equation*}
for some $s > D/2$. Then the operator $F(L)$ is of weak type $(1,1)$ and bounded on
 $L^p(\R^{d_1}\times\R^{d_2})$ for all $p \in (1,\infty)$. In addition
\begin{equation*}
\|F(L)\|_{L^1 \to L^{1,w}} \leq C  \sup_{t>0} \|\eta \, \delta_t F
\|_{W_2^s} \quad and \quad 
 \qquad \|F(L)\|_{L^p \to L^{p}} \leq C_p \sup_{t>0} \|\eta \, \delta_t F\|_{W_2^s}.
\end{equation*}

\end{theorem}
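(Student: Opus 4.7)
The plan is to follow the strategy used in \cite{MaS12} for the case $\sigma = 2$, treating Theorem \ref{thm1m} as a consequence of the weighted Plancherel estimate (Proposition \ref{prop2.2}), the companion off-diagonal kernel bound (Lemma \ref{lem3.4}), and the abstract singular integral machinery of \cite{DOS}. First I would dyadically decompose the spectral multiplier: choose an auxiliary partition of unity $\sum_{k \in \mathbb{Z}} \eta(2^{-k} \lambda) = 1$ on $(0,\infty)$ adapted to the given $\eta$, and write $F(L) = \sum_{k} F_k(L)$ with $F_k := F \cdot \eta(2^{-k} \cdot)$ supported in a dyadic annulus at scale $2^k$. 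The hypothesis $\sup_{t>0} \|\eta\, \delta_t F\|_{W_2^s} < \infty$ then delivers a uniform bound on each rescaled norm $\|\eta\, \delta_{2^k} F\|_{W_2^s}$, which is the correct input at the scale $2^k$.

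Second, for each piece $F_k(L)$ I would invoke Proposition \ref{prop2.2} to control its integral kernel in a weighted $L^2$ norm against a power of the control distance associated with $L$, and Lemma \ref{lem3.4} to convert this into $L^1 \to L^2$ off-diagonal bounds for $F_k(L)$ at the scale naturally adapted to spectral level $2^k$. Finite propagation speed for the wave equation $\partial_t^2 u + L u = 0$ enters here as the standard device that turns the spectral localization of $F_k$ into effective spatial localization of its kernel. The effective dimension that appears is precisely $D = \max\{d_1+d_2, 3d_2/2\}$, which governs the critical Sobolev exponent.

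Third, these weighted kernel bounds feed into the abstract framework of \cite{DOS} (or the variant used in \cite{MaS12}), which converts them into the weak type $(1,1)$ bound and the $L^p$ bounds for $1 < p < 2$ by a Calder\'on--Zygmund decomposition. The sharp threshold $s > D/2$ is exactly what makes the dyadic series $\sum_k F_k(L)$ convergent in the singular integral sense; the range $2 < p < \infty$ then follows by duality since $L$ is self-adjoint on $L^2(\R^{d_1}\times \R^{d_2})$.

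The real obstacle, as emphasized in the introduction, lies entirely in establishing Proposition \ref{prop2.2} and Lemma \ref{lem3.4} themselves. The spectral decomposition of $L_1$ obtained from \cite{G} involves Airy-type special functions in place of the Hermite eigenfunctions that governed the case $\sigma = 2$, and producing a weighted Plancherel bound sharp enough to reach the exponent $D/2$ requires genuinely new and more delicate estimates for the resulting spectral projectors. Once those ingredients are in place, the assembly sketched above is routine.
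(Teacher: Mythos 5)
Your top-level architecture (dyadic decomposition of $F$, kernel estimates for each spectrally localized piece, the Calder\'on--Zygmund machinery of \cite{DOS}, duality and self-adjointness for $p>2$) is indeed how the paper proceeds: the final step is literally the proof of \cite[Theorem 3.1]{DOS} with inequality (4.18) there replaced by the kernel bound \eqref{eqcoro1b}. But the middle of your sketch misassigns the roles of the two ``crucial estimates'' and, more importantly, omits the mechanism that actually lowers the Sobolev threshold from $Q/2=(d_1+3d_2/2)/2$ to $D/2$. Proposition \ref{prop2.2} is not itself the weighted Plancherel estimate; it is an a priori bound $\|(\sum_j|x_j'|)^\gamma|\xi|^{2\gamma}f\|_2\le C\|\widetilde L_\xi^\gamma f\|_2$ for the partial-Fourier-transformed operators, and Lemma \ref{lem3.4} is not an off-diagonal bound but a uniform bound on a sum over eigenvalue multi-indices of products of the Airy-type eigenfunctions; these two combine with the explicit eigenfunction expansion of $K_{F(L)}$ (Proposition \ref{prop3.2}, Proposition \ref{prop3.3}) to give the weighted Plancherel estimate, which is Proposition \ref{prop3.5}, with the weight $w_R(x,y)=\min\{R,|y'|^{-1}\}|x'|$ --- a weight in the degenerate direction, not ``a power of the control distance.'' Spatial localization in the paper comes from the Gaussian heat kernel bounds \eqref{Gauss} via \cite[Lemmas 4.3 and 4.4]{DOS} (Proposition \ref{stw}); finite propagation speed is mentioned only as an alternative route.

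The genuine gap is this: if you feed only unweighted $L^1\to L^2$ (Gaussian-type) off-diagonal bounds into the \cite{DOS} framework, you obtain weak type $(1,1)$ for $s>Q/2$, not $s>D/2$. The reduction to $D=Q-\min\{d_1,d_2/2\}$ requires (i) interpolating the $\alpha=0$ weighted Plancherel estimate of Proposition \ref{prop3.5} against the $(1+R\rho(\cdot,y))^\alpha$-weighted $L^2$ estimates coming from the Gaussian bounds, to get the simultaneous weight and Sobolev-norm bound of Proposition \ref{prop4.3}; and (ii) the weight-integrability estimate of Lemma \ref{lem4.2},
\begin{equation*}
\int_{\R^{d_1}\times\R^{d_2}}(1+w_R(x,y))^{-2\gamma}(1+R\rho(x,y))^{-2\beta}\,dx\le C_{\gamma,\beta}|B(y,R^{-1})|,
\qquad \gamma<\min\{d_1/2,d_2/4\},\ \beta>Q/2-\gamma,
\end{equation*}
which, via Cauchy--Schwarz, converts Proposition \ref{prop4.3} into the $L^1$ kernel bound \eqref{eqcoro1b} with only $\beta>\alpha+D/2$ derivatives. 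This trade of the weight $w_R^\gamma$ against the ball-volume growth is exactly where the exponent $D$ is created; your sketch asserts that $D$ ``governs the critical Sobolev exponent'' but supplies no step that produces it, so as written the argument would only close at the non-sharp threshold $s>Q/2$.
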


The above result is sharp if $d_1 \ge d_2/2$, see discussion in Section \ref{sec5} below. A version of result essentially equivalent to Theorem~\ref{thm1m} 
can be expressed in terms of  Bochner-Riesz summability of the operator $L$. Our
approach allows us to obtain the following result which is again  optimal if
$d_1 \ge d_2/2$.

\begin{theorem}\label{thm2r}
Suppose that $\kappa > (D-1)/2$ and $p \in \left[
1,\infty\right]$. Then the Bochner-Riesz means $(1-t
L)_+^\kappa$ are bounded on $L^p(\R^{d_1}\times\R^{d_2})$ uniformly in $t \in [
0,\infty)$.
\end{theorem}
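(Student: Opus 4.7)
The strategy is to reduce Theorem~\ref{thm2r} to Theorem~\ref{thm1m} for $p \in (1,\infty)$, and to treat the endpoints $p \in \{1,\infty\}$ separately by a dyadic decomposition of $F_\kappa(\lambda) := (1-\lambda)_+^\kappa$ near its singularity at $\lambda = 1$. Self-adjointness of $L$ makes the cases $p=1$ and $p=\infty$ dual, and the case $t = 0$ is trivial, so it suffices to bound $F_\kappa(tL)$ uniformly in $t > 0$.

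For $p \in (1,\infty)$ I would directly verify the hypothesis of Theorem~\ref{thm1m}. Splitting according to whether the singularity at $\lambda = 1/t$ of $\delta_t F_\kappa$ lies inside or outside the compact support of $\eta$, one obtains
\[
\sup_{t > 0} \|\eta \, \delta_t F_\kappa\|_{W_2^s} < \infty \qquad \text{for every } s < \kappa + \tfrac{1}{2},
\]
since the worst contribution is essentially that of a H\"older function of order $\kappa$. Because $\kappa > (D-1)/2$, I may choose $s$ with $D/2 < s < \kappa + 1/2$, and Theorem~\ref{thm1m} gives the required uniform $L^p$ bound.

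At the endpoints, Theorem~\ref{thm1m} only provides weak-type $(1,1)$, so the specific shape of $F_\kappa$ must be exploited. Fix a smooth partition $1 = \phi_0(\mu) + \sum_{j \geq 1} \phi(2^j \mu)$ with $\phi$ supported in $[1/2,2]$, and write $F_\kappa = \phi_0(1-\cdot) F_\kappa + \sum_{j \geq 1} F_\kappa^{(j)}$ where
\[
F_\kappa^{(j)}(\lambda) = \phi\bigl(2^j(1-\lambda)\bigr)(1-\lambda)_+^\kappa = 2^{-j\kappa}\, G\bigl(2^j(1-\lambda)\bigr),
\]
with $G \in C_c^\infty$ a fixed function. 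The smooth off-singularity piece falls under Theorem~\ref{thm1m}. Each dyadic piece is compactly supported at scale $2^{-j}$ near $\lambda = 1$, and applying the weighted Plancherel estimates developed for Proposition~\ref{prop2.2} to the rescaled function yields an $L^1 \to L^1$ bound of order $2^{-j\kappa + j(D-1)/2}$ for $F_\kappa^{(j)}(L)$; the resulting series converges exactly when $\kappa > (D-1)/2$.

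The main obstacle is precisely in this last step: the extra half derivative, relative to the threshold $D/2$ in Theorem~\ref{thm1m}, must be extracted from the scale-$2^{-j}$ compact support of each $F_\kappa^{(j)}$ through the refined weighted $L^2$ / finite-propagation estimates for $L$. This is the familiar Stein-type gain underlying the sharp Bochner-Riesz exponent, but in the present setting it relies essentially on the non-trivial spectral analysis of $L_1$ encoded in Proposition~\ref{prop2.2} and Lemma~\ref{lem3.4}.
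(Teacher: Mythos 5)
Your plan is essentially the paper's own argument: the authors prove Theorem~\ref{thm2r} by invoking the Bochner--Riesz argument of \cite[Section 6]{DOS} with their weighted $L^1$ kernel estimate \eqref{eqcoro1b} (obtained from the weighted Plancherel bound of Proposition~\ref{prop3.5} via Proposition~\ref{prop4.3} and Lemma~\ref{lem4.2}), which is exactly your dyadic decomposition of $(1-\lambda)_+^\kappa$ near $\lambda=1$ with the half-derivative gain coming from the $L^2$-Sobolev norm of the scale-$2^{-j}$ pieces. The only cosmetic difference is attribution: the per-piece $L^1$ bound rests on Corollary~\ref{coro1} (hence on Proposition~\ref{prop3.5} combined with the Gaussian-bound estimates by interpolation), rather than directly on Proposition~\ref{prop2.2} and Lemma~\ref{lem3.4}, but these are indeed the underlying crucial estimates.
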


Proofs of Theorems~\ref{thm1m} and \ref{thm2r} are concluded in Section~\ref{sec4}. 
Similarly as in \cite{MaS12} the key point of proving Theorems
\ref{thm1m} and \ref{thm2r} is to obtain
``weighted Plancherel estimate'' for spectral multipliers of the considered
Grushin type operators. A proof of such estimates is described in Section~\ref{sec3} and constitutes 
a main original contribution of this paper to the discussed research area.  A part of  a proof of Theorems~\ref{thm1m} and \ref{thm2r} described in   Section~\ref{sec4} below
is essentially the same as in \cite{MaS12}. We repeat the short argument here for the sake of completeness. 
To make it easier to compare the results obtained in   \cite{MaS12}  and in this paper   
we try to  use the same notation as in  \cite{MaS12} whenever it is possible.

\section{Notation and preliminaries}\label{sec2}

A more general class of Grushin type operators which includes operators $L_\sigma$ 
for $\sigma>0$  defined above was studied in \cite{RS}. In what follows we will need the basic
results concerning the Riemannian distance corresponding to Grushin type operators
and the standard Gaussian bounds for the corresponding hear kernels which were obtain
in \cite{RS} and which we recall below.

\begin{proposition}
Let  $\rho$ be Riemannian distance corresponding to  the Grushin operator $L$ and let
$B(x,r)$ be the ball with centre at $x$ and radius $r$.
Then
\begin{equation}\label{eqCD}
\rho(x,y) \sim |x' - y'| + \begin{cases}
\frac{|x''-y''|}{(|x'| + |y'|)^{1/2}} &\text{if $|x''-y''| \leq (|x'| + |y'|)^{3/2}$,}\\
|x''-y''|^{2/3} &\text{if $|x''-y''| \geq (|x'| + |y'|)^{3/2}$.}
\end{cases}
\end{equation}
Moreover the volume of $B(x,r)$ satisfies following estimates
\begin{equation}\label{eqVr}
|B(x,r)| \sim r^{d_1+d_2} \max\{r,|x'|\}^{d_2/2},
\end{equation}
and in particular, for all $\lambda \geq 0$,
\begin{equation}\label{doubling}
|B(x,\lambda r)| \leq C (1+\lambda)^Q |B(x,r)|
\end{equation}
where $Q=d_1+\frac{3d_2}{2}$ is a homogenous dimension of the considered metric space.
Next, there exist constants $b,C > 0$ such that, for all $t > 0$, the integral kernel $p_t$ of the operator $\exp(-tL)$ satisfies  the following Gaussian bounds
\begin{equation}\label{Gauss}
|p_t(x,y)| \leq C |B(y,t^{1/2})|^{-1} e^{-b \rho(x,y)^2/t}
\end{equation}
for all $x,y \in \R^{d_1}\times\R^{d_2}$.
\end{proposition}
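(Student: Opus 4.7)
The plan is that all four claims are special cases of the general theory developed in \cite{RS} for Grushin-type operators, so the proof mostly amounts to specializing the weight $V(x')=\sum_{j=1}^{d_1}|x'_j|$ and checking that the exponent $\sigma=1$ produces the asserted formulas. The operator $L$ has the sum-of-squares form $L=-\sum_j X_j^2-\sum_k Y_k^2$ with $X_j=\partial_{x'_j}$ and $Y_k=V(x')^{1/2}\partial_{x''_k}$, so $\rho$ is (equivalent to) the Carnot--Carath\'eodory distance associated with $\{X_j,Y_k\}$, and the volumes and heat kernel bounds are those attached to the corresponding sub-Laplacian.

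To establish \eqref{eqCD} I would argue by constructing near-optimal horizontal curves. Moving only in the $x'$-variables along $X_j$ costs $|x'-y'|$, so the $|x'-y'|$ summand is trivial. For the $x''$ displacement one uses $Y_k$, whose speed is $V(x')^{1/2}\sim(|x'|+|y'|)^{1/2}$ so long as one can stay in a region where $|x'|\gtrsim \max\{|x'|,|y'|\}$; this gives the upper bound $|x''-y''|/(|x'|+|y'|)^{1/2}$, valid when the $x'$-excursion needed is not too large, i.e.\ when $|x''-y''|\leq(|x'|+|y'|)^{3/2}$. In the complementary regime one must first travel to a region with $|x'|\sim R$, spend $|x''-y''|/R^{1/2}$ to move in $x''$, and return; optimizing over $R$ yields $R\sim|x''-y''|^{2/3}$ and total cost $\sim|x''-y''|^{2/3}$. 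Matching lower bounds come from the standard sub-unit/duality argument (cf.\ Nagel--Stein--Wainger) since $V(x')\lesssim|x'|$ forces the speed in the $x''$-direction to obey this bound along any horizontal curve.

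The volume estimate \eqref{eqVr} then follows by direct integration using the formula for $\rho$. If $r\leq|x'|$ the ball $B(x,r)$ sits in a region where $V(x')^{1/2}\sim|x'|^{1/2}$, so the $x'$-section has radius $\sim r$ and the $x''$-section has radius $\sim r|x'|^{1/2}$, giving $|B(x,r)|\sim r^{d_1+d_2}|x'|^{d_2/2}$. If $r\geq|x'|$ the ball reaches the region $\{|z'|\leq r\}$ where the $x''$-extent is $\sim r\cdot r^{1/2}=r^{3/2}$, giving $|B(x,r)|\sim r^{d_1+3d_2/2}$. These two regimes combine into \eqref{eqVr}, and \eqref{doubling} follows from \eqref{eqVr} by an elementary algebraic check distinguishing the cases $\lambda r \gtrless |x'|$, producing the homogeneous dimension $Q=d_1+3d_2/2$.

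The Gaussian bound \eqref{Gauss} is then a consequence of the general heat kernel theory for divergence-form sub-Laplacians on a doubling metric measure space: the on-diagonal bound $\|p_t\|_\infty\lesssim |B(y,t^{1/2})|^{-1}$ follows from Nash-type inequalities (as in \cite{RS}), and the off-diagonal Gaussian factor is obtained by the Davies--Gaffney/Grigor'yan perturbation argument once one has finite propagation speed for $\cos(t\sqrt{L})$, which is automatic for symmetric sub-Laplacians. The main technical point, and the only place where $\sigma=1$ differs essentially from $\sigma=2$, is the non-smoothness of $V$ across the hyperplanes $\{x'_j=0\}$ in the derivation of \eqref{eqCD}; this is handled in \cite{RS} by approximating $V$ by smooth weights and passing to the limit in the distance estimate, after which \eqref{eqVr}--\eqref{Gauss} are formal consequences.
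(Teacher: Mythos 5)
Your proposal is correct and takes essentially the same route as the paper: the paper proves nothing here, but simply cites \cite[Proposition 5.1 and Corollary 6.6]{RS}, which treats exactly this class of degenerate operators with weight comparable to $|x'|$ and yields \eqref{eqCD}--\eqref{Gauss} upon specialization. Your sketch (sub-unit curve construction and optimization over the excursion radius $R$ for the distance, integration of the two regimes $r\lessgtr|x'|$ for the volume, and doubling plus Nash/Davies--Gaffney machinery for the Gaussian bound) is a faithful outline of how those results are obtained in \cite{RS}; the only cosmetic difference is that \cite{RS} works with the quadratic form of the divergence-form operator, so the non-smoothness of $\sum_j|x'_j|$ is accommodated directly rather than by smooth approximation.
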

\begin{proof} For the proof, we refer readers to
\cite[Proposition 5.1 and Corollary 6.6]{RS}.
\end{proof}

\bigskip

Next,  let
$\mathcal{F} : L^2(\R^{d_1} \times \R^{d_2}) \to L^2(\R^{d_1} \times
\R^{d_2})$ be the partial Fourier transform in variables $x''$
defined by
\[
\mathcal{F} \phi(x',\xi) = (2\pi)^{-d_2/2} \int_{\R^{d_2}}
\phi(x',x'') \, e^{-i \xi \cdot x''} \, dx''.
\]
 Then
\begin{eqnarray*}
\mathcal{F} L\phi (x',\xi) = \widetilde{L}_{\xi} \, \mathcal{F}
\phi(x',\xi)
\end{eqnarray*}
where $\widetilde{L}_{\xi}$ is Schr\"odinger type operators defined by 
$$
\widetilde{L}_{\xi}=-\Delta_{d_1}+\left(\sum_{{{j}}=1}^{d_1}|x'_{{j}}|\right)|\xi|^2
$$
acting on $L^2(\R^{d_1})$ where $\xi\in \R^{d_2}$. In what fallows
we will need the following estimates for the operator
$\widetilde{L}_{\xi}$, compare  \cite{heb93, CS, CKS} and \cite{MaS12}.
\begin{proposition}\label{prop2.2}
For all $\gamma \in [ 0,\infty)$ and $f \in L^2(\R^{d_1})$,
\begin{equation}\label{eq12}
\|\, (\sum_{{{j}}=1}^{d_1}|x'_{{j}}|)^\gamma|\xi|^{2\gamma} f\|_2
\leq C_\gamma \|\widetilde{L}_{\xi}^{\gamma} f\|_2.
\end{equation}
\end{proposition}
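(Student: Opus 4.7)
The plan is to exploit the tensor-product structure of $\widetilde{L}_{\xi}$ and reduce \eqref{eq12} to a one-dimensional Airy-type weighted inequality. First I would rescale away the $\xi$-dependence: the unitary dilation $y=|\xi|^{2/3}x'$ intertwines $\widetilde{L}_{\xi}$ with $|\xi|^{4/3}\widetilde{H}$, where $\widetilde{H}=-\Delta+\sum_{j=1}^{d_1}|y_j|$, while the weight $|\xi|^2 \sum_j |x'_j|$ becomes $|\xi|^{4/3}\sum_j|y_j|$. Both sides of \eqref{eq12} thereby pick up the same factor $|\xi|^{4\gamma/3}$, so it suffices to work at $|\xi|=1$.

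Next I would decouple the variables. Writing $\widetilde{H}=\sum_j H_j$ with $H_j=-\partial_{y_j}^2+|y_j|$ acting in a single variable, the summands commute pairwise and are non-negative, so joint spectral diagonalization gives $H_j^\gamma\le\widetilde{H}^\gamma$ as quadratic forms for every $\gamma\ge 0$. Combining this with the pointwise bound $(\sum_j|y_j|)^\gamma\le C_{d_1,\gamma}\sum_j|y_j|^\gamma$ and Fubini in the $j$-th coordinate reduces the problem to the one-dimensional statement
\[
\||y|^\gamma g\|_{L^2(\R)}\le C_\gamma\|H^\gamma g\|_{L^2(\R)},\qquad H:=-d^2/dy^2+|y|.
\]

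For the 1D inequality the range $\gamma\in[0,1/2]$ is immediate from $H\ge|y|$ together with the L\"owner--Heinz monotonicity of $t\mapsto t^{2\gamma}$. The first substantive step is $\gamma=1$: expanding $\|Hg\|_2^2=\|-g''+|y|g\|_2^2$ and integrating by parts, using $(|y|)''=2\delta_0$ distributionally, gives the identity
\[
\|Hg\|_2^2=\|g''\|_2^2+\|yg\|_2^2+2\int_\R|y||g'|^2\,dy-2|g(0)|^2.
\]
The negative term $2|g(0)|^2$ I would absorb using the trace inequality $|g(0)|^2\le\|g\|_2\|g'\|_2$ combined with $\|g'\|_2^2\le\langle Hg,g\rangle\le\|Hg\|_2\|g\|_2$ and the spectral gap $\|g\|_2\le\lambda_0^{-1}\|Hg\|_2$ of $H$ (with $\lambda_0>0$ the lowest eigenvalue determined by the first zero of $\mathrm{Ai}'$); this yields $|g(0)|^2\le C\|Hg\|_2^2$ and hence $\|yg\|_2\le C'\|Hg\|_2$. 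L\"owner--Heinz applied to $|y|^2\le (C')^2H^2$ extends this to all $\gamma\in[0,1]$.

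The main obstacle is extending the estimate to $\gamma>1$, where operator monotonicity fails. Here I would invoke the explicit spectral decomposition of $H$ from \cite{G}: the eigenfunctions are Airy-type functions $\phi_n(y)\propto\mathrm{Ai}(|y|-\lambda_n)$ of alternating parity, with eigenvalues $\lambda_n\sim n^{2/3}$ given by the zeros of $\mathrm{Ai}$ and $\mathrm{Ai}'$. Concentration of $\phi_n$ in the classically allowed region $\{|y|\le\lambda_n\}$ together with Agmon-type exponential decay outside it yields the diagonal bound $\||y|^\gamma\phi_n\|_2\le C_\gamma\lambda_n^\gamma$, and a Schur-test argument exploiting oscillatory cancellation in the off-diagonal matrix elements $\int|y|^{2\gamma}\phi_n\phi_m\,dy$ produces $\||y|^k g\|_2\le C_k\|H^k g\|_2$ for each positive integer $k$. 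A final L\"owner--Heinz step between consecutive integers then delivers \eqref{eq12} for all $\gamma\ge 0$.
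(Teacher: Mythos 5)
Your overall architecture is the same as the paper's: rescale by the dilation $y=|\xi|^{2/3}x'$ to reduce to $\widetilde H=-\Delta+\sum_j|y_j|$, use the pointwise bound $(\sum_j|y_j|)^\gamma\le C\sum_j|y_j|^\gamma$ together with the fact that the commuting one-dimensional operators $H_j=-\partial_{y_j}^2+|y_j|$ satisfy $H_j^{2\gamma}\le\widetilde H^{2\gamma}$ (joint diagonalization; in the paper this is the step $\sum_j L_{x'_j}^{2k}\le(\sum_j L_{x'_j})^{2k}$), and finish with L\"owner--Heinz. Your treatment of the one-dimensional inequality for $\gamma\le 1$ (the identity for $\|Hg\|_2^2$ with the $-2|g(0)|^2$ boundary term, absorbed via the trace inequality and the spectral gap) is correct.

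The gap is in the case $\gamma>1$ of the one-dimensional estimate $\||y|^k g\|_2\le C_k\|H^kg\|_2$, $k\ge 2$ --- and this is precisely the range that matters, since in the application $\gamma$ runs up to $d_2/4$ and can be arbitrarily large. The diagonal bound $\||y|^\gamma\phi_n\|_2\le C\lambda_n^\gamma$ by itself gives nothing, because $|y|^{2k}$ is far from diagonal in the eigenbasis; what is needed is quantitative decay of the off-diagonal matrix elements $M_{nm}=\int|y|^{2k}\phi_n\phi_m\,dy$ strong enough that $\sup_n\sum_m|M_{nm}|\lambda_n^{-k}\lambda_m^{-k}<\infty$, and your ``Schur-test argument exploiting oscillatory cancellation'' is asserted rather than carried out. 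Producing such decay requires iterating the commutator identity $(\lambda_n-\lambda_m)\int w\,\phi_n\phi_m=-\int w'(\phi_n\phi_m'-\phi_n'\phi_m)$ and controlling derivatives of the Airy-type eigenfunctions against the eigenvalue spacing $\lambda_{n+1}-\lambda_n\sim\lambda_n^{-1/2}$; this is genuine work and is exactly the content that the paper does not reprove but imports wholesale as Proposition~3.4 of \cite{G}, namely $\||x|^k f\|_2\le C_k\|(-d^2/dx^2+|x|)^kf\|_2$ for all integers $k$. As written, your argument therefore has a hole at its crucial step; it can be closed either by actually executing the matrix-element estimates (essentially redoing \cite{G}) or, as the paper does, by citing that result and then running your (correct) decoupling, scaling and L\"owner--Heinz steps.
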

\begin{proof}
Set $\widetilde{L}=-\Delta_{d_1}+\sum_{{{j}}=1}^{d_1}|x'_{{j}}|$ and
next define operator $L_{x'_i}$ by the following formula
$$
L_{x'_i}=-\partial_ {x'_i}^2+|x'_i|.
$$
By Proposition 3.4 of \cite{G}
$$
\||x'_i|^k f\|_2\leq C'_k \|L_{x'_i}^{k} f\|_2
$$
for all positive natural numbers $k\in \N$.
Hence
$$
\|\, (\sum_{{{j}}=1}^{d_1}|x'_{{j}}|)^k f\|_2^2
 \leq C \sum_{{{j}}=1}^{d_1}\| |x'_{{j}}|^k f\|_2^2
 \leq C_k  \sum_{{{j}}=1}^{d_1}
\|L_{x'_j}^{k} f\|_2^2.
$$
Note that all $L_{x'_i}$ are non-negative self-adjoint operators and
commute strongly, that is, their resolvent  commute. Therefore
for all $\ell_i\in \mathbb{Z}_+$, operators
$\prod_{i=1}^nL_{x'_i}^{\ell_i}$ are self-adjoint and non-negative.
Hence
$$
\sum_{{{j}}=1}^{d_1}L_{x'_j}^{2k}\leq(\sum_{{{j}}=1}^{d_1}L_{x'_j})^{2k}
$$
for all $k\in \N$ and 
\begin{eqnarray*}
\|\, (\sum_{{{j}}=1}^{d_1}|x'_{{j}}|)^k f\|^2_2 &\leq& C_k
\sum_{{{j}}=1}^{d_1}\|L_{x'_j}^{k}
f\|^2_2=C_k\langle\sum_{{{j}}=1}^{d_1}L_{x'_j}^{2k}f,f\rangle\\
&\leq& C_k\langle(\sum_{{{j}}=1}^{d_1}L_{x'_j})^{2k}f,f\rangle =C_k
\|\widetilde{L}^{k} f\|^2_2.
\end{eqnarray*}
Next, for a function $f \in C^\infty_c(\R^{d_1})$ we define function $\delta_tf$  by the formula $\delta_tf(x)=f(tx)$.
Note that if $t=|\xi|^{-2/3}$ then
$$
\widetilde{L}^k_{\xi}=\left(-\Delta_{d_1}+\left(\sum_{{{j}}=1}^{d_1}|x'_{{j}}|\right)t^{-3}\right)^{k}=
t^{-2k}\delta_{t^{-1}}\widetilde{L}^k\delta_t.
$$
Hence
\begin{eqnarray}
\|\widetilde{L}_{\xi}^k f\|_2=
\|t^{-2k}\delta_{t^{-1}}\widetilde{L}^k\delta_tf\|_2
&=&t^{-2k}t^{d_1/2}\|\widetilde{L}^k\delta_tf\|_2\nonumber\\
&\geq& C''_k t^{-2k}t^{d_1/2}\|\,
(\sum_{{{j}}=1}^{d_1}|x'_{{j}}|)^k \delta_tf\|_2\nonumber\\
&=&C_k''  |\xi|^{2k}\|\, (\sum_{{{j}}=1}^{d_1}|x'_{{j}}|)^k
f\|_2\nonumber.
\end{eqnarray}
This proves Proposition~\ref{prop2.2} for all $\gamma=k \in \N$.
Now in virtue of L\"owner-Heinz inequality (see, e.g., \cite[Section I.5]{Co})
we can extend these estimates to all $\gamma\in
[0,\infty)$.
\end{proof}

\section{Crucial estimates}\label{sec3}

To be able to obtain a required description of spectral decomposition of the operators $\widetilde{L}_{\xi}$
 we need  the following  properties of spectral
decomposition of operator $A=-\frac{d^2}{d x^2}+|x|$ acting on
$L^2(\R)$ and which are essentially based on results from  \cite{G}.

\begin{proposition}\label{2prop3.1}
Let $\lambda_n$ and $h_n$ be the $n$-th eigenvalue and normalized
eigenfunction of the operator $A=-\frac{d^2}{d x^2}+|x|$.   Then its
spectral decomposition satisfies following properties:

\smallskip
 {\rm (i)} The operator $A$ has only a pointwise spectrum
and its eigenvalues  belong to $(1,\infty)$. In particular  the
first eigenvalue is larger than $1$.

\smallskip
{\rm (ii)} Every eigenvalue of $A$ is simple and the only point of
accumulation of the eigenvalue sequence is $\infty$. Thus
$\{h_n\}_{n\in \N}$ is a complete orthonormal system of $L^2(\R)$.

\smallskip
{\rm (iii)} The eigenvalues $\lambda_n$ satisfy the following
estimates:
\begin{eqnarray}\label {eeqq2.3}
C_1\left(\frac{3\pi}{4}n\right)^{2/3} \leq \lambda_n\leq C_2
\left(\frac{3\pi}{4}n\right)^{2/3},
\end{eqnarray}
\begin{eqnarray}\label {eeqq2.4}
\frac{\pi}{2}\lambda_{n+1}^{-1/2}\leq \lambda_{n+1}-\lambda_n\leq
\frac{\pi}{2}\lambda_{n}^{-1/2},
\end{eqnarray}
where $C_2\geq C_1>0$ are constants.

\smallskip
{\rm (iv)} For the eigenfunction $h_n$ corresponding to the
eigenvalue $\lambda_n$,
\begin{equation}\label{eq16}
h_n(u) \leq \begin{cases}
C\lambda_n^{-\frac{1}{4}}(1 + \big||u|-\lambda_n\big|)^{-\frac{1}{4}}, &\text{ $u \in \R$,}\\
C\exp(-c|u|^{\frac{3}{2}}), &\text{ $u \geq 2\lambda_n$.}
\end{cases}
\end{equation}
\end{proposition}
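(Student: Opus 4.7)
The plan is to reduce everything to classical analysis of the Airy function, after which the bulk of the proposition can be extracted from \cite{G}. Parity symmetry $x\mapsto -x$ commutes with $A$, so $L^2(\R)$ splits into even and odd subspaces; on each, the eigenvalue equation $-h''+xh=\lambda h$ on $(0,\infty)$ with Neumann (respectively Dirichlet) boundary condition at $0$ arises. The only $L^2$-solution near $+\infty$ is a multiple of $\text{Ai}(x-\lambda)$, so eigenvalues in the even sector are determined by $\text{Ai}'(-\lambda)=0$ and eigenvalues in the odd sector by $\text{Ai}(-\lambda)=0$.

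For parts (i) and (ii), since $|x|\to\infty$, the form domain of $A$ embeds compactly in $L^2(\R)$, so $A$ has compact resolvent; hence the spectrum is purely discrete with $\{h_n\}$ a complete orthonormal system. Within each parity sector simplicity is automatic by ODE uniqueness, and the classical interlacing of the zeros of $\text{Ai}$ and $\text{Ai}'$ on $(-\infty,0)$ shows that no eigenvalue belongs to both sectors, so every eigenvalue of $A$ is simple. The bound $\lambda_1>1$ reduces to the numerical fact that the largest zero of $\text{Ai}'$ is approximately $-1.018$. For (iii), the known asymptotics for the zeros of $\text{Ai}$ and $\text{Ai}'$ give $\lambda_n\sim(3\pi n/4)^{2/3}$ after merging the two interlaced sequences; the gap estimate is a refined Sturm/WKB statement that consecutive zeros of solutions of $-u''+(x-\lambda)u=0$ near $x=0$ lie at distance approximately $\pi/\sqrt{\lambda}$, which gives exactly the $\pi/2$ constant in \eqref{eeqq2.4} after accounting for the doubling from even/odd interlacing.

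For part (iv), I would apply the standard uniform asymptotics $|\text{Ai}(t)|\leq C(1+|t|)^{-1/4}$ on $(-\infty,0]$ and $|\text{Ai}(t)|\leq C\exp(-\tfrac{2}{3}t^{3/2})$ on $[0,\infty)$ with $t=|u|-\lambda_n$, and then divide by the $L^2$ normalization of $\text{Ai}(|u|-\lambda_n)$; a direct computation using the oscillatory asymptotic on the classically allowed region $|u|<\lambda_n$ shows $\|\text{Ai}(|u|-\lambda_n)\|_{L^2(\R)}\sim \lambda_n^{1/4}$, so the normalization supplies the prefactor $\lambda_n^{-1/4}$. The main obstacle is ensuring uniformity of the constants as $n\to\infty$, particularly in the transition region $||u|-\lambda_n|\lesssim 1$ near the turning point, where the oscillatory and exponentially decaying asymptotics must be glued together by the uniform Airy asymptotic. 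Controlling this uniformly is precisely what \cite{G} provides, so the proof will consist of organizing the above reduction and then invoking the relevant propositions of \cite{G}.
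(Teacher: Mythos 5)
Your proposal is correct and takes essentially the same route as the paper: the paper's proof is simply a citation of Gadzi\'nski's results in \cite{G} for (i)--(iii) and of \cite{G} plus classical Airy estimates (as in \cite{Ho}) for (iv), and those cited facts are precisely the Airy reduction you sketch (eigenfunctions proportional to $\mathrm{Ai}(|u|-\lambda_n)$, eigenvalues at zeros of $\mathrm{Ai}$ and $\mathrm{Ai}'$, normalization of order $\lambda_n^{1/4}$). Since you too ultimately defer the uniform constants and the precise gap bound \eqref{eeqq2.4} to \cite{G}, there is no substantive difference between the two arguments.
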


\begin{proof}
 (i), (ii) and (iii) are just reformulation of Proposition
2.1, Corollary 2.2, Facts 2.3, 2.7 and 2.8 of \cite{G}. (iv) is an
easy consequence of Theorem 2.6 of \cite{G} and estimates for Airy
function (see for example \cite{Ho}, pp. 213-215).
\\
\end{proof}

Now we are able to describe  spectral resolutions  of Grushin operator $L=L_1$ and operators $\widetilde{L}_{\xi}$
defined in Section~\ref{sec2}. It is interesting to compare it with spectral decomposition of the operator $L_2$ 
obtained in \cite{MaS12}. Spectral decompositions of $L_1$ and $L_2$ are significantly different  even though 
they share many common features. 
We also investigate integral kernels of spectral multipliers of $L$ and  $\widetilde{L}_{\xi}$. 
For $T= F(L)$ or $T=F(\widetilde{L}_{\xi})$, by  $K_{T} $  we denote the integral kernel of the operator
$T$, defined by the identity
\begin{equation*}
T f(x) = \int_{X} K_{T}(x,y) \, f(y) \,dy
\end{equation*}
where $X={\R^{d_1}\times\R^{d_2}}$ for $L$ and  $X=\R^{d_1}$ for $\widetilde{L}_{\xi}$. 

In terms of the eigenvalues and eigenfunctions of the operator
$A=-\frac{d^2}{d x^2}+|x|$, one can obtain explicit formula for the
integral kernel of the operator $F(L)$, compare also \cite[Proposition 5]{MaS12}.
Let $\lambda_n$ and $h_n$ be the $n$-th eigenvalue and eigenfunction
of the operator $-\frac{d^2}{d x^2}+|x|$ on $L^2(\R)$.  We know
that $\{h_n\}_{n\in \N}$ is a complete orthonormal system of
$L^2(\R)$. For all positive integers $d_1$, all  ${\bf n} \in
\N^{d_1}$  and  all $\xi \in
\R^{d_2}$, we define function $\tilde h_{d_1,{\bf n}} \colon \R^{d_1} \to \R$ by the formula
\[\tilde h_{d_1,{\bf n}}(x',\xi) = |\xi|^{d_1/3} h_{n_1}(|\xi|^{2/3} x'_1) \cdots h_{n_{d_1}}(|\xi|^{2/3} x'_{d_1}).\]
We are now able to describe the kernel $K_{F(L)}$.

\begin{proposition}\label{prop3.2}
For all bounded compactly supported Borel functions $F : \R \to \C$
\begin{eqnarray*}
K_{F(L)}(x,y) &=&(2\pi)^{-d_2} \int_{\R^{d_2}} K_{F(\widetilde{L}_\xi)}(x',y') \,
e^{i\xi \cdot (x''-y'')} \,d\xi\nonumber\\
&=& (2\pi)^{-d_2} \int_{\R^{d_2}} \sum_{{\bf n} \in \N^{d_1}}
F(\sum_{i=1}^{d_1}|\xi|^{\frac{4}{3}}\lambda_{n_i}) \, \tilde h_{d_1,{\bf n}}(y',\xi) \, 
\tilde h_{d_1,{\bf n}}(x',\xi) \, e^{i\xi \cdot (x''-y'')}
\,d\xi
\end{eqnarray*}
for almost   all $x=(x',x'') ,y=(y',y'') \in \R^{d_1}\times\R^{d_2}$.
\end{proposition}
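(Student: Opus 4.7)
The plan is to split the proof into two parts, matching the two equalities in the formula, and to use the partial Fourier transform to diagonalize $L$ into the family $\{\widetilde{L}_\xi\}_{\xi \in \R^{d_2}}$.

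\textbf{Step 1 (First equality: Fourier multiplier reduction).} The identity $\mathcal{F} L \mathcal{F}^{-1} = \widetilde{L}_{(\cdot)}$ stated in Section~\ref{sec2}, read as an operator-valued multiplication in $\xi$, implies by the spectral theorem that $\mathcal{F} F(L) \mathcal{F}^{-1}$ is the operator-valued Fourier multiplier with symbol $\xi \mapsto F(\widetilde{L}_\xi)$. Applying the inverse partial Fourier transform in $x''$ and unwinding the convolution-type action on the second factor gives the first equality. The assumption that $F$ is bounded and compactly supported ensures that $F(\widetilde{L}_\xi)$ is uniformly Hilbert--Schmidt in $\xi$ on compact sets of $\xi$-space (using the Gaussian bound \eqref{Gauss} and the partial Fourier transform), so all integrals are absolutely convergent and Fubini applies.

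\textbf{Step 2 (Spectral decomposition of $\widetilde{L}_\xi$).} Write $\widetilde{L}_\xi = \sum_{j=1}^{d_1} L_{x'_j,\xi}$ where $L_{x'_j,\xi} = -\partial_{x'_j}^2 + |x'_j||\xi|^2$ acts only on the $j$-th coordinate. Each $L_{x'_j,\xi}$ is a rescaled copy of $A = -d^2/dx^2 + |x|$: with $t = |\xi|^{-2/3}$ one has $L_{x'_j,\xi} = t^{-2}\delta_{t^{-1}} A \delta_t$ (exactly the scaling used already in the proof of Proposition~\ref{prop2.2}). Hence the $L^2(\R)$-normalized eigenfunctions of $L_{x'_j,\xi}$ are $u \mapsto |\xi|^{1/3} h_n(|\xi|^{2/3} u)$ with eigenvalues $|\xi|^{4/3}\lambda_n$. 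Since the $L_{x'_j,\xi}$ pairwise commute (they act on disjoint variables), the tensor-product functions $\tilde h_{d_1,\mathbf{n}}(\cdot,\xi)$ are eigenfunctions of $\widetilde{L}_\xi$ with eigenvalue $\sum_{i=1}^{d_1} |\xi|^{4/3}\lambda_{n_i}$, and by Proposition~\ref{2prop3.1}(ii) they form a complete orthonormal basis of $L^2(\R^{d_1})$.

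\textbf{Step 3 (Second equality).} The spectral theorem applied to $\widetilde{L}_\xi$ in the orthonormal basis $\{\tilde h_{d_1,\mathbf{n}}(\cdot,\xi)\}_{\mathbf{n}\in\N^{d_1}}$ yields
\[
K_{F(\widetilde{L}_\xi)}(x',y') = \sum_{\mathbf{n}\in \N^{d_1}} F\Bigl(\sum_{i=1}^{d_1}|\xi|^{4/3}\lambda_{n_i}\Bigr)\, \tilde h_{d_1,\mathbf{n}}(x',\xi)\,\tilde h_{d_1,\mathbf{n}}(y',\xi),
\]
where the complex conjugate is dropped because the eigenfunctions $h_n$ are real-valued (standard for Sturm--Liouville operators with real coefficients). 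Inserting this into the formula from Step~1 produces the claimed identity.

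\textbf{Expected difficulty.} The main technical issue is not the algebra but the justification of convergence and the rigorous passage from the ``multiplier by $F(\widetilde{L}_\xi)$'' on the Fourier side to an honest kernel identity almost everywhere. The compact support of $F$ together with the eigenvalue growth \eqref{eeqq2.3} forces, for each fixed $\xi \ne 0$, only finitely many multi-indices $\mathbf{n}$ to contribute, which makes the sum in Step~3 finite for each $\xi$; bounding these contributions uniformly enough in $\xi$ (using the decay estimates \eqref{eq16} for $h_n$) to legitimize the exchange of sum and $\xi$-integration is the only delicate point, and it is where the hypothesis that $F$ is bounded with compact support is essential.
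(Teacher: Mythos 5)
Your argument is essentially the paper's: both diagonalize $L$ via the partial Fourier transform in $x''$ combined with the rescaled eigenbasis $\tilde h_{d_1,{\bf n}}(\cdot,\xi)$ of $\widetilde{L}_\xi$ (the paper merely packages the two steps as the single isometry $\mathcal{G}\mathcal{F}$ and applies its inverse, while you prove the two displayed equalities separately), and the paper is no more careful about convergence than you are. The only slip is peripheral: $F(\widetilde{L}_\xi)$ is \emph{not} uniformly Hilbert--Schmidt on compact sets of $\xi$ containing the origin, since the number of eigenvalues $|\xi|^{4/3}N_{\bf n}$ falling in $\supp F$ grows as $\xi\to 0$; this does not affect the core argument.
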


\begin{proof}
We noticed in Section \ref{sec2} that $\mathcal{F} L\phi (x',\xi) = \widetilde{L}_{\xi} \, \mathcal{F}
\phi(x',\xi)$ where $\mathcal{F}$ is  the partial Fourier transform in variables $x''$.
Next note that for all $\xi \neq 0$
\[\widetilde{L}_{\xi} \, \tilde h_{d_1,{\bf n}}(x',\xi) =\left(\sum_{j=1}^{d_1} |\xi|^{\frac{4}{3}}\lambda_{n_j}\right) \,
\tilde h_{d_1,{\bf n}}(x',\xi).\] Moreover by 
Propostition~\ref{2prop3.1} (ii),  the set $\{\tilde h_{d_1,{\bf n}}(x',\xi)\}_{{\bf n}\in \N^{d_1}}$ is a complete orthonormal system
of $L^2(\R^{d_1})$. Hence if $\mathcal{G} : L^2(\R^{d_1} \times
\R^{d_2}) \to L^2(\N^{d_1} \times \R^{d_2})$ is the isometry defined
by
\[\mathcal{G} \psi(n,\xi) = \int_{\R^{d_1}} \psi(x',\xi) \, \tilde h_{d_1,{\bf n}}(x',\xi) \,dx',\]
then
\[
\mathcal{G} \mathcal{F} L \phi({\bf n},\xi) = \sum_{j=1}^{d_1}
|\xi|^{\frac{4}{3}}\lambda_{n_j} \, \mathcal{G} \mathcal{F}
\phi({\bf n},\xi)
\]
and
\begin{eqnarray}\label{2eq3.1}
\mathcal{G} \mathcal{F} \, F(L) \, \phi({\bf n},\xi) =
F(\sum_{j=1}^{d_1} |\xi|^{\frac{4}{3}}\lambda_{n_j}) \, \mathcal{G}
\mathcal{F} \phi({\bf n},\xi).
\end{eqnarray}
However the inverse of $\mathcal{G}$ is given by 
$$
\mathcal{G}^{-1}\varphi(x',\xi)=\sum_{{\bf n}\in
\N^{d_1}}\varphi({\bf n},\xi)\tilde h_{d_1,{\bf n}}(x',\xi).
$$
and inverse of $\mathcal{F}$ can be expressed in terms of partial inverse Fourier 
transform in $x''$. 
Applying  $\mathcal{G}^{-1}$ and $\mathcal{F}^{-1}$ to both sides of equality \eqref{2eq3.1} shows Proposition \ref{prop3.2}.
\end{proof}

Next, for all positive integers $d_1$ and all ${\bf n} \in
\N^{d_1}$  we define function  $H_{d_1,{\bf n}} \colon  \R^{d_1} \to \R$ by the formula
\[H_{d_1,{\bf n}}(x') =  h^2_{n_1}(x'_1)
\cdots h^2_{n_{d_1}}(x'_{d_1}).\]
As a simple consequence of Proposition~\ref{prop3.2} we obtain following estimates. 

\begin{proposition}\label{prop3.3}
For all $\gamma \geq 0$ and for every   compactly supported bounded Borel function $F : \R \to \C$,
\begin{equation*}
\Big\|\,\big(\sum_{i=1}^{d_1}|x'_i|\big)^\gamma
K_{F(L)}(\cdot,y)\Big\|_2^2 \leq C_\gamma \int_0^\infty
|F(\theta)|^2 \sum_{{\bf n} \in \N^{d_1}}
\frac{\theta^{Q/2-\gamma}}{N_{\bf n}^{Q/2-3\gamma}} \, H_{d_1,{\bf
n}}\left(\frac{\theta^{1/2} y'}{N_{\bf n}^{1/2}}\right)
\,\frac{d\theta}{\theta}
\end{equation*}
for almost all $y=(y',y'') \in \R^{d_1}\times \R^{d_2}$ where $N_{\bf
n}=\sum_{i=1}^{d_1}\lambda_{n_i}$ and $\lambda_{n_i}$ is the
eigenvalue corresponding to eigenfunction $h_{n_i}$.
\end{proposition}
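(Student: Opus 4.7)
The plan is to start from the kernel formula given by Proposition \ref{prop3.2} and combine Plancherel's theorem in the $x''$ variable, orthonormality of the eigenfunctions $\{\tilde h_{d_1,{\bf n}}(\cdot,\xi)\}$ in $L^2(\R^{d_1})$, and the weighted estimate from Proposition \ref{prop2.2} to move the weight $(\sum_i |x'_i|)^\gamma$ onto a power of $\widetilde{L}_\xi$, which acts diagonally on the eigenbasis. A final change of variables converts the resulting integral over $\xi \in \R^{d_2}$ into the $\theta$-integral appearing in the claim.

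More concretely, I first note that by Proposition \ref{prop3.2} the kernel $K_{F(L)}(x,y)$ is the partial inverse Fourier transform in $x''$ of
\[
G_y(x',\xi) := \sum_{{\bf n} \in \N^{d_1}} F\bigl(|\xi|^{4/3} N_{\bf n}\bigr)\, \tilde h_{d_1,{\bf n}}(y',\xi)\, \tilde h_{d_1,{\bf n}}(x',\xi)\, e^{-i\xi \cdot y''}.
\]
Applying Plancherel's theorem in $x''$ and then integrating over $x'$ against the weight $(\sum_i|x'_i|)^{2\gamma}$, I obtain
\[
\Bigl\|\bigl(\textstyle\sum_i |x'_i|\bigr)^\gamma K_{F(L)}(\cdot,y)\Bigr\|_2^2 = (2\pi)^{-d_2}\int_{\R^{d_2}} \Bigl\|\bigl(\textstyle\sum_i |x'_i|\bigr)^\gamma g_{\xi,y}\Bigr\|_{L^2(\R^{d_1})}^2 d\xi,
\]
where $g_{\xi,y}(x') = \sum_{{\bf n}} F(|\xi|^{4/3} N_{\bf n})\,\tilde h_{d_1,{\bf n}}(y',\xi)\,\tilde h_{d_1,{\bf n}}(x',\xi)$.

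Since each $\tilde h_{d_1,{\bf n}}(\cdot,\xi)$ is an eigenfunction of $\widetilde{L}_\xi$ with eigenvalue $|\xi|^{4/3} N_{\bf n}$, Proposition \ref{prop2.2} yields
\[
\Bigl\|\bigl(\textstyle\sum_i |x'_i|\bigr)^\gamma g_{\xi,y}\Bigr\|_2^2 \leq C_\gamma |\xi|^{-4\gamma}\,\|\widetilde{L}_\xi^{\gamma} g_{\xi,y}\|_2^2 = C_\gamma \sum_{{\bf n}} \bigl|F(|\xi|^{4/3}N_{\bf n})\bigr|^2 |\tilde h_{d_1,{\bf n}}(y',\xi)|^2\, |\xi|^{-4\gamma/3} N_{\bf n}^{2\gamma},
\]
using once more the orthonormality of $\{\tilde h_{d_1,{\bf n}}(\cdot,\xi)\}$ in $L^2(\R^{d_1})$. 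Unwinding the definition of $\tilde h_{d_1,{\bf n}}$, the factor $|\tilde h_{d_1,{\bf n}}(y',\xi)|^2$ equals $|\xi|^{2d_1/3} H_{d_1,{\bf n}}(|\xi|^{2/3} y')$.

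It remains a book-keeping step: passing to polar coordinates $r = |\xi|$ in $\R^{d_2}$, then substituting $\theta = r^{4/3} N_{\bf n}$ for each fixed ${\bf n}$ (so that $r = (\theta/N_{\bf n})^{3/4}$), and collecting all exponents of $\theta$ and $N_{\bf n}$. Using the identity $Q/2 = d_1/2 + 3d_2/4$ the $\theta$-exponent works out to $Q/2 - \gamma$ (after extracting $d\theta/\theta$), and the $N_{\bf n}$-exponent to $-(Q/2 - 3\gamma)$, while the argument of $H_{d_1,{\bf n}}$ becomes $(\theta/N_{\bf n})^{1/2} y'$, giving exactly the claimed bound. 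The only real content is the combination of Plancherel, orthonormality, and Proposition \ref{prop2.2}; I expect the main point requiring care is tracking the powers through the polar/scaling change of variables so that they assemble into the factor $\theta^{Q/2-\gamma}/N_{\bf n}^{Q/2-3\gamma}$.
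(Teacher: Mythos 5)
Your proposal is correct and follows essentially the same route as the paper's proof: expand $K_{F(L)}$ via Proposition \ref{prop3.2}, apply Plancherel in $x''$, use Proposition \ref{prop2.2} together with the diagonal action of $\widetilde{L}_\xi^\gamma$ on the orthonormal eigenbasis $\{\tilde h_{d_1,{\bf n}}(\cdot,\xi)\}$, and finish with the polar/scaling change of variables $\theta=|\xi|^{4/3}N_{\bf n}$; your exponent book-keeping (using $Q/2=d_1/2+3d_2/4$) checks out and reproduces the factor $\theta^{Q/2-\gamma}/N_{\bf n}^{Q/2-3\gamma}$.
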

\begin{proof} By Propositions~\ref{prop2.2} and \ref{prop3.2}
\begin{eqnarray}\label{eq13}
\Big\|\,\big(\sum_{i=1}^{d_1}|x'_i|\big)^\gamma
K_{F(L)}(\cdot,y)\Big\|_2^2
&=&\int_{\R^{d_2}}\Big\|\big(\sum_{i=1}^{d_1}|x'_i|\big)^\gamma
K_{F(\widetilde{L}_\xi)}(x',y')\Big\|^2_{L^{2}(\R^{d_1})}d\xi\nonumber\\
&\leq&\int_{\R^{d_2}}|\xi|^{-4\gamma}\Big\|\widetilde{L}_\xi^\gamma
K_{F(\widetilde{L}_\xi)}(x',y')\Big\|^2_{L^{2}(\R^{d_1})}d\xi.
\end{eqnarray}
Next note that for all $\gamma \geq 0$ and  $y' \in \R^{d_1}$
\[\widetilde{L}_\xi^{\gamma}  \left( K_{F(\widetilde{L}_\xi)}(\cdot,y') \right)
 = K_{\widetilde{L}_\xi^{\gamma} F(\widetilde{L}_\xi)}(\cdot, y')\]
Hence 
\begin{eqnarray}\label {2eqq19}
 \|\widetilde{L}_\xi^\gamma
K_{F(\widetilde{L}_\xi)}(x',y')\|^2_{L^{2}(\R^{d_1})}
&\leq& \|K_{\widetilde{L}_\xi^\gamma F(\widetilde{L}_\xi)}(x',y')\|^2_{L^{2}(\R^{d_1})}\nonumber\\
&\leq&  \sum_{{\bf n}\in
\N^{d_1}}\left|\big(\sum_{i=1}^{d_1}|\xi|^{\frac{4}{3}}\lambda_{n_i}\big)^\gamma
F(\sum_{i=1}^{d_1}|\xi|^{\frac{4}{3}}\lambda_{n_i})\right|^2|{\tilde h_{d_1,{\bf n}}(y',\xi)}|^2\nonumber\\
&\leq& C|\xi|^{\frac{2d_1}{3}+\frac{8\gamma}{3}}\sum_{{\bf n}\in
\N^{d_1}}N_{\bf n}^{2\gamma}|F(|\xi|^{\frac{4}{3}}N_{\bf
n})|^2H_{d_1,{\bf n}}(|\xi|^{\frac{2}{3}}y').
\end{eqnarray}
Now substituting   \eqref{2eqq19} to 
 \eqref{eq13} and simple change of variables proves Proposition \ref{prop3.3}

\end{proof}

The following lemma is a version of Lemma 9 of \cite{MaS12}. However the proof is more 
complex and
requires a new approach especially when $d_1 \ge 2$. It is the most essential part of the proof of
our main spectral multiplier results. 

\begin{lemma}\label{lem3.4}
For all $\varepsilon  >0$ there exists a constant $C>0$ which does not
depend on   $x' \in \R^{d_1}$   such that
\begin{equation}\label{eqairy}
\sum_{{\bf n} \in \N^{d_1}}
\frac{\max\{1,|x'|\}^{2\varepsilon }}{N_{\bf n}^{d_1/2+3\varepsilon }} \, H_{d_1,{\bf n}}
\left( \frac{x'}{N_{\bf n}^{1/2}}\right) < C < \infty
\end{equation}
 where
$N_{\bf n}=\sum_{i=1}^{d_1}\lambda_{n_i}$ and $\lambda_{n_i}$ is the
eigenvalue corresponding to eigenfunction $h_{n_i}$.
\end{lemma}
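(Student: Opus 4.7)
The plan is to separate the cases of small and large $|x'|$, and in the latter case to decompose the ${\bf n}$-sum dyadically according to the size of $N_{\bf n}$, exploiting the two qualitatively different bounds on $h_n$ from Proposition~\ref{2prop3.1}(iv). The key one-dimensional building block I will need is the uniform estimate
\[
\sum_{n:\lambda_n\leq M}\lambda_n^{-1/2}(1+|a-\lambda_n|)^{-1/2}\leq CM^{1/2},\qquad a\geq 0,\; M\geq 1,
\]
obtained by interpreting the sum as a Riemann-type approximation of $\int(1+|a-\lambda|)^{-1/2}\,d\lambda$ via the density $dn\sim \lambda^{1/2}\,d\lambda$ implicit in the asymptotics and spacing of $\lambda_n$ in Proposition~\ref{2prop3.1}(iii).

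When $|x'|\leq 1$ every argument $x'_i/N_{\bf n}^{1/2}$ is bounded, so the Airy bound collapses to $h_{n_i}^2(x'_i/N_{\bf n}^{1/2})\leq C\lambda_{n_i}^{-1}$; combining this with $N_{\bf n}\geq \max_i \lambda_{n_i}$ and summing in decreasing order of eigenvalues reduces the claim to a convergent series in the largest eigenvalue.

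Now suppose $R:=|x'|>1$ and, after relabelling, $|x'_1|\geq R/\sqrt{d_1}$. For each dyadic $N\geq 1$ I consider the shell $\mathcal{S}_N=\{{\bf n}:N_{\bf n}\in[N,2N]\}$. For \emph{large shells} $N\geq cR^{2/3}$ the Airy bound is applied to every factor; since ${\bf n}\in\mathcal{S}_N$ forces $\lambda_{n_i}\leq 2N$ for each $i$, this weaker constraint makes the sum factor across coordinates and each one-dimensional factor is controlled by the building block, giving
\[
\sum_{{\bf n}\in\mathcal{S}_N}\prod_{i} h_{n_i}^2(x'_i/N^{1/2})\leq CN^{d_1/2}.
\]
The shell contribution is therefore bounded by $CR^{2\varepsilon}N^{-3\varepsilon}$, and the geometric sum over dyadic $N\geq cR^{2/3}$ yields $O(R^{2\varepsilon}\cdot R^{-2\varepsilon})=O(1)$. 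For \emph{small shells} $N<cR^{2/3}$, choose $c=(4\sqrt{d_1})^{-2/3}$ so that $|x'_1|/N^{1/2}\geq 4N\geq 2\lambda_{n_1}$ whenever $\lambda_{n_1}\leq 2N$; the exponential estimate then gives, uniformly in $n_1$,
\[
h_{n_1}^2(x'_1/N^{1/2})\leq C\exp(-c'R^{3/2}/N^{3/4})\leq C\exp(-c''R).
\]
The free $n_1$-sum is absorbed by $\#\{n_1:\lambda_{n_1}\leq 2N\}\sim N^{3/2}$ and the remaining $d_1-1$ coordinates are controlled by the building block, producing a shell bound of order $R^{2\varepsilon}N^{1-3\varepsilon}e^{-c''R}$. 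Summed over the $O(\log R)$ relevant dyadic $N$, this is bounded uniformly in $R$ and in fact decays exponentially as $R\to\infty$.

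The hard part is the single-variable building block together with its deployment when $d_1\geq 2$: one needs a uniform $M^{1/2}$ bound on a sum that naively looks divergent, and then all $d_1$ coordinates have to be handled simultaneously without losing the $R^{-2\varepsilon}$ gain that cancels the $\max\{1,|x'|\}^{2\varepsilon}$ prefactor. This is exactly what dictates the combination of dyadic decomposition in $N_{\bf n}$, coordinate factorisation, and the Airy/exponential split, and is precisely what makes the argument substantially more intricate than the corresponding step (Lemma~9 of \cite{MaS12}) for the Hermite-type spectrum of $L_2$, especially in the multi-dimensional case $d_1\geq 2$.
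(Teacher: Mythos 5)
Your overall skeleton is the right one and matches the paper's: split at $N_{\bf n}\sim|x'|^{2/3}$, use the super-exponential Airy decay when $N_{\bf n}^{3/2}\lesssim|x'|$, use the envelope bound $h_n^2(u)\leq C\lambda_n^{-1/2}(1+||u|-\lambda_n|)^{-1/2}$ otherwise, convert eigenvalue sums to integrals via the spacing \eqref{eeqq2.4}, and close with the $\max\{1,|x'|\}^{2\varepsilon}N^{-3\varepsilon}$ bookkeeping. The case $|x'|\leq1$, your one-dimensional building block $\sum_{\lambda_n\leq M}\lambda_n^{-1/2}(1+|a-\lambda_n|)^{-1/2}\leq CM^{1/2}$, and the small-shell (exponential) part are all essentially correct (up to a harmless adjustment of the constant $c$, since within a shell the actual argument is $|x'_1|/N_{\bf n}^{1/2}\geq|x'_1|/(2N)^{1/2}$, so your choice $c=(4\sqrt{d_1})^{-2/3}$ does not quite guarantee $u\geq2\lambda_{n_1}$ at the top of the shell).

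The gap is in the large-shell estimate $\sum_{{\bf n}\in\mathcal{S}_N}H_{d_1,{\bf n}}(x'/N_{\bf n}^{1/2})\leq CN^{d_1/2}$, which is exactly the hard core of the lemma. The summand does \emph{not} factor across coordinates: each factor is evaluated at $x'_i/N_{\bf n}^{1/2}$, and $N_{\bf n}$ depends on all of $n_1,\dots,n_{d_1}$, so dropping the shell constraint to $\lambda_{n_i}\leq2N$ does not decouple the sum. Nor may you silently replace $N_{\bf n}$ by the dyadic scale $N$ in the argument: the Airy envelope is peaked at $|u|\approx\lambda_n$, and within a shell the argument shifts by up to $\sim|x'_i|/N^{1/2}$, which at the bottom shells $N\sim|x'|^{2/3}$ is of size $\sim N$, i.e.\ comparable to the eigenvalue itself, so the resonance location moves by a full scale and pointwise domination fails. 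If instead one freezes $N_{\bf n}$ only after a correct reduction, the naive estimate (integrate $d_1-1$ coordinates with the building block, bound the last factor by $1$, then integrate over the shell) yields $N^{(d_1+1)/2}$, not $N^{d_1/2}$; recovering the missing $N^{-1/2}$ requires exploiting the resonance of the remaining factor jointly with the variable $N_{\bf n}$. For $d_1=1$ this can be repaired by monotonicity, since $\lambda\mapsto|x'|/\lambda^{1/2}-\lambda$ has derivative of size $\sim1$ precisely because $|x'|\lesssim N^{3/2}$ in this regime; but for $d_1\geq2$ the resonant value of $\lambda_{n_i}$ depends on the other coordinates, and controlling that coupling is exactly what the paper's Part~2 does: it converts the sum to an integral via the gradient bound on $g$ and the disjointness of the unit cubes $\prod_i[\mu_{n_i},\mu_{n_i}+1]$ (from \eqref{eeqq2.4}), decomposes into the sets $E_j$ where $\xi_j\sim N_\xi$, changes variables $\nu_j=N_\xi$ and $u_i=\nu_i\nu_j^{1/2}$, and treats $\nu_j$ above and below $(2d_1|x'|)^{2/3}$ separately, the middle regime being where the extra gain is extracted. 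Your proposal asserts the conclusion of that analysis in one sentence; as written, the factorisation step is false, and filling it in for $d_1\geq2$ requires essentially the paper's argument (or an equally careful substitute), so the proof is incomplete exactly where you yourself identify the difficulty.
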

\begin{proof}
We split the sum into two parts,
\begin{eqnarray}
&&\sum_{{\bf n} \in \N^{d_1}}
\frac{\max\{1,|x'|\}^{2\varepsilon }}{N_{\bf n}^{d_1/2+3\varepsilon }} \,
H_{d_1,{\bf n}} \left( \frac{x'}{N_{\bf
n}^{1/2}}\right)\nonumber\\
&&\leq \left(\sum_{N_{{\bf n}}^{3/2}\leq |x'|/(2d_1)}+\sum_{N_{{\bf
n}}^{3/2}> |x'|/(2d_1)}\right)
\frac{\max\{1,|x'|\}^{2\varepsilon }}{N_{\bf n}^{d_1/2+3\varepsilon }} \,
H_{d_1,{\bf n}} \left( \frac{x'}{N_{\bf n}^{1/2}}\right).
\end{eqnarray}
\noindent {\it Part 1:} $N_{{\bf n}}^{3/2}\leq |x'|/(2d_1)$. By
Proposition~\ref{2prop3.1}  $\lambda_{n_i} \ge 1$   so  $N_{{\bf n}}>1$.
Hence this part
is empty unless $|x'|>1$. Note that
$$
 \frac{|x'|_\infty}{N_{\bf n}^{1/2}}\geq  \frac{|x'|}{d_1N_{\bf n}^{1/2}}
 \geq 2N_{{\bf n}}
$$
where $|x'|_{\infty}=\max\{x'_1,\cdots,x'_{d_1}\}$. By \eqref{eq16} for every
natural number $N\leq |x'|/(2d_1)$
$$
\sum_{(N-1)^{2/3}<N_{\bf n}\leq N^{2/3}}H_{d_1,{\bf n}} \left(
\frac{x'}{N_{\bf n}^{1/2}}\right) \leq
C\exp(-c|x'|_\infty^\frac{3}{2}/N^{\frac{1}{2}})\leq
C\exp(-c|x'|^\frac{3}{2}/N^{\frac{1}{2}}).
$$
Thus 
\begin{eqnarray}
&&\sum_{N_{{\bf n}}^{3/2}\leq |x'|/(2d_1)}
\frac{\max\{1,|x'|\}^{2\varepsilon }}{N_{\bf n}^{d_1/2+3\varepsilon }} \,
H_{d_1,{\bf n}} \left( \frac{x'}{N_{\bf
n}^{1/2}}\right)\nonumber\\
&&\leq \sum_{N\leq |x'|/(2d_1)}\sum_{(N-1)^{2/3}<N_{\bf n}\leq
N^{2/3}} \frac{\max\{1,|x'|\}^{2\varepsilon }}{N_{\bf
n}^{d_1/2+3\varepsilon }} H_{d_1,{\bf n}} \left( \frac{x'}{N_{\bf
n}^{1/2}}\right)\nonumber\\
&&\leq C \sum_{N \leq |x'|/2} {|x'|}^{2\varepsilon }
N^{-d_1/3-2\varepsilon }
\exp(-c|x'|^\frac{3}{2}/N^{\frac{1}{2}})\nonumber\\
&&\leq C \sum_{N \in \N} \sup_{t \geq 2N} t^{4\varepsilon /3} \exp(-ct)
\leq C.
\end{eqnarray}

\medskip

\noindent {\it Part 2:} $N_{{\bf n}}^{3/2}> |x'|/(2d_1)$.
Again  by  \eqref{eq16}
\begin{eqnarray*}
H_{d_1,{\bf n}} \left( \frac{x'}{N_{\bf n}^{1/2}}\right)
&=&\prod_{i=1}^{d_1}h^2_{n_i}\left(
\frac{x'_i}{N_{\bf n}^{1/2}}\right)\\
&\leq&C \prod_{i=1}^{d_1}\lambda_{n_i}^{-\frac{1}{2}}
\left(1+\left|\frac{|x'_i|}{N_{\bf
n}^{1/2}}-\lambda_{n_i}\right|\right)^{-\frac{1}{2}}.
\end{eqnarray*}
Hence 
\begin{eqnarray}\label {eeqq20}
&&\sum_{N_{{\bf n}}^{3/2}> |x'|/(2d_1)}
\frac{\max\{1,|x'|\}^{2\varepsilon }}{N_{\bf n}^{d_1/2+3\varepsilon }}
H_{d_1,{\bf n}} \left( \frac{x'}{N_{\bf n}^{1/2}}\right)\nonumber\\
&&\leq C \sum_{N_{{\bf n}}^{3/2}> |x'|/(2d_1)}
\frac{\max\{1,|x'|\}^{2\varepsilon }}{N_{\bf
n}^{d_1/2+3\varepsilon }}\prod_{i=1}^{d_1}\lambda_{n_i}^{-\frac{1}{2}}
\left(1+\left|\frac{|x'_i|}{N_{\bf
n}^{1/2}}-\lambda_{n_i}\right|\right)^{-\frac{1}{2}}.
\end{eqnarray}
Next, define function $g \colon [1,\infty)^{d_1} \to \R_+$ by the
formula
$$
g(\mu)=g(\mu_{1}, \ldots,
\mu_{{d_1}})=\frac{\max\{1,|x'|\}^{2\varepsilon }}{N_{\bf
\mu}^{d_1/2+3\varepsilon }}\prod_{i=1}^{d_1}\mu_{i}^{-\frac{1}{3}}
\left(1+\left|\frac{|x'_i|}{N_{\bf
\mu}^{1/2}}-\mu_{i}^{2/3}\right|\right)^{-\frac{1}{2}}
$$
where $N_{\bf \mu}=\sum_{i=1}^{d_1} \mu_{i}^{2/3}$.  Note that
$g(\mu_{1}, \ldots, \mu_{{d_1}})>0$ and there exists a constant
$C>0$ such that
$$
\left| \nabla g(\mu_{1}, \ldots, \mu_{{d_1}})\right| \le C
g(\mu_{1}, \ldots, \mu_{{d_1}})
$$
when  $\mu= (\mu_{1}, \ldots \mu_{{d_1}})\in [1,\infty)^{d_1}$ and  $N_{\bf \mu}=\sum_{i=1}^{d_1} \mu_{i}^{2/3}\ge
(|x'|/(2d_1))^{2/3}$. By the above estimate for the gradient of $g$
$$
e^{-C|\mu-\bar \mu|}\le \left|\frac{g(\mu)}{g(\bar\mu)}\right| \le e^{C|\mu-\bar\mu|}
$$
for all $\mu, \bar \mu$ in the region described above. 
Hence
\begin{eqnarray}\label{2eq28}
g(\mu_{1}, \ldots, \mu_{{d_1}})\le C
\int_{\prod_{i=1}^{d_1}[\mu_{i},\mu_{i}+1 ]} g(\xi_{1}, \ldots,
\xi_{{d_1}}) d\xi_{1} \ldots d\xi_{{d_1}}.
\end{eqnarray}
Set $\mu_{n_i}=\lambda_{n_i}^{3/2}$.  By \eqref{eeqq20},
\begin{eqnarray}\label{2eqq29}
\sum_{N_{{\bf n}}^{3/2}> |x'|/(2d_1)}
\frac{\max\{1,|x'|\}^{2\varepsilon }}{N_{\bf n}^{d_1/2+3\varepsilon }}
H_{d_1,{\bf n}} \left( \frac{x'}{N_{\bf n}^{1/2}}\right)\leq
\sum_{N_{{\bf n}}^{3/2}> |x'|/(2d_1)} g(\mu_{n_1}, \ldots,
\mu_{n_{d_1}}).
\end{eqnarray}
However , by (\ref{eeqq2.4}) and mean value theorem for
each $1\leq i\leq d_1$,
\begin{eqnarray}\label{2eq29}
\mu_{n_i}-\mu_{n_i-1}=\lambda_{n_i}^{3/2}-\lambda_{n_i-1}^{3/2}&\geq& \frac{3\pi}{4}\lambda_{n_i}^{-1/2}\lambda_{n_i-1}^{1/2}\nonumber\\
&\geq& \frac{3\pi}{4}\left(\frac{\lambda_{n_i-1}^{3/2}}{\frac{\pi}{2}+\lambda_{n_i-1}^{3/2}}\right)^{1/2}\nonumber\\
&\geq& \frac{3\pi}{8} > 1
\end{eqnarray}
which means that for all ${\bf n}\in \N^{d_1}$, cubes 
$\prod_{i=1}^{d_1}[\mu_{n_i},\mu_{n_i}+1]$ are mutually  disjoint. Note again 
 that by Proposition~\ref{2prop3.1}  $\lambda_{n_i} \ge 1$ so $N_{{\bf
n}}>1$. Hence by \eqref{2eq28}, \eqref{2eqq29} and
\eqref{2eq29} 
\begin{eqnarray*}
&&\sum_{N_{{\bf n}}^{3/2}> |x'|/(2d_1)}
\frac{\max\{1,|x'|\}^{2\varepsilon }}{N_{\bf n}^{d_1/2+3\varepsilon }}
H_{d_1,{\bf n}} \left( \frac{x'}{N_{\bf n}^{1/2}}\right)\\
&& \le C \int_{N_{{\bf \mu}}> \max\{(|x'|/(2d_1))^{2/3},1\}}
g(\mu_{1}, \ldots, \mu_{{d_1}})d\mu_{1} \ldots d\mu_{{d_1}}.
\end{eqnarray*}
Using the changes of variables  $\mu_i=\xi_i^{3/2}$ we get 
\begin{eqnarray}\label{eeqqq26}
&&\sum_{N_{{\bf n}}^{3/2}> |x'|/(2d_1)}
\frac{\max\{1,|x'|\}^{2\varepsilon }}{N_{\bf n}^{d_1/2+3\varepsilon }}
H_{d_1,{\bf n}} \left( \frac{x'}{N_{\bf n}^{1/2}}\right)\nonumber\\
&&\leq C \int_{N_{\bf \xi}> \max\{(|x'|/(2d_1))^{2/3},1\}}
\left[\frac{\max\{1,|x'|\}^{2\varepsilon }}{N_{\bf
\xi}^{d_1/2+3\varepsilon }}N_{\bf
\xi}^{\frac{d_1}{4}}\prod_{i=1}^{d_1}\xi_{i}^{-\frac{1}{2}}
\big||x'_i|-\xi_{i}N_{\bf
\xi}^{\frac{1}{2}}\big|^{-\frac{1}{2}}\right]d\xi_{1}^{\frac{3}{2}}
\ldots d\xi_{d_1}^{\frac{3}{2}}\nonumber\\
&&\leq C \int_{N_{\bf \xi}> \max\{(|x'|/(2d_1))^{2/3},1\}}
\left[\frac{\max\{1,|x'|\}^{2\varepsilon }}{N_{\bf
\xi}^{d_1/4+3\varepsilon }}\prod_{i=1}^{d_1} \big||x'_i|-\xi_{i}N_{\bf
\xi}^{\frac{1}{2}}\big|^{-\frac{1}{2}}\right]d\xi_{1} \ldots
d\xi_{d_1}=I
\end{eqnarray}
where $N_{\xi}=\sum_{i=1}^{d_1}\xi_i$. To estimate this integral we
use the following decomposition 
\begin{eqnarray*}
&&\{{\bf \xi}\, \colon \,N_{\bf \xi}\geq \max\{(|x'|/(2d_1))^{2/3},1\}\}\\
&&\quad\quad\quad=\bigcup_{j=1}^{d_1}E_j=\bigcup_{j=1}^{d_1}\{{\bf
\xi}\, \colon \, N_{\bf \xi}\geq \max\{(|x'|/(2d_1))^{2/3},1\}, N_\xi/d_1\leq
\xi_j\leq N_\xi\}.
\end{eqnarray*}
Now   on each of  set $E_j$ we introduce new coordinates
$$
\nu_1=\xi_1, \ldots, \nu_{j-1}=\xi_{j-1}, \nu_{j}=N_\xi,
\nu_{j+1}=\xi_{j+1}, \ldots,  \nu_{d_1}=\xi_{d_1}.
$$
Then
\begin{eqnarray}\label{eeqq21}
&&I \leq C\sum_{j=1}^{d_1}
\int_{\max\{(|x'|/(2d_1))^{2/3},1\}}^{\infty}\frac{\max\{1,|x'|\}^{2\varepsilon }}{\nu_{j}^{d_1/4+3\varepsilon }} \nonumber \\
&&\quad\quad\times \int_{S_j} \prod_{i\neq j}
\big||x'_i|-\nu_{i}\nu_j^{\frac{1}{2}}\big|^{-\frac{1}{2}}\big||x'_j|-\bar{\nu_{j}}\nu_j^{\frac{1}{2}}\big|^{-\frac{1}{2}}
d\nu_1 \ldots d\nu_{d_1}
\end{eqnarray}
where  $\bar{\nu_j}=\nu_j-\sum_{i\neq j}\nu_i$ and $S_j=\{\nu\, \colon \, \nu_j/d_1 \leq \bar{\nu_j}\leq \nu_j,\, 0\leq
\nu_i\leq \nu_j, \forall i\neq j \}$.

\smallskip
 Next we split the integral into two parts:
$\nu_j>\max\{(2d_1|x'|)^{2/3},1\}$ and $(|x'|/(2d_1))^{2/3}\leq
\nu_j\leq (2d_1|x'|)^{2/3}$.
 Note that if
$\nu_j\geq (2d_1|x'|)^{2/3}$ and $\nu_j/d_1\leq \bar{\nu_j}\leq
\nu_j$ then
$$
\big||x'_j|-\bar{\nu_{j}}\nu_j^{\frac{1}{2}}\big|^{-\frac{1}{2}}
\leq C\nu_j^{-3/4}.
$$
Note also that there exists a constant $C$ such that for all $A, N >0$
$$
\int_0^N \left|  A- x   \right|^{-1/2} dx \le CN^{1/2}.
$$
Hence for $\nu_j>\max\{(2d_1|x'|)^{2/3},1\}$,
\begin{eqnarray*}
&&\int_{S_j} \prod_{i\neq j}
\big||x'_i|-\nu_{i}\nu_j^{\frac{1}{2}}\big|^{-\frac{1}{2}}\big||x'_j|-\bar{\nu_{j}}\nu_j^{\frac{1}{2}}\big|^{-\frac{1}{2}}
d\nu_1 \ldots d\nu_{j-1}d\nu_{j+1}\ldots d\nu_{d_1}\\
&&\leq C\nu_j^{-3/4}\prod_{i\neq
j}\int_{0}^{\nu_j}\big||x'_i|-\nu_{i}\nu_j^{\frac{1}{2}}\big|^{-\frac{1}{2}}d\nu_i\\
&&\leq C\nu_j^{-3/4}\nu_j^{\frac{d_1-1}{4}}\leq C\nu_j^{d_1/4-1}
\end{eqnarray*}
and
\begin{eqnarray}\label {eqq21}
&&\int_{\max\{(2d_1|x'|)^{2/3},1\}}^{\infty}\frac{\max\{1,|x'|\}^{2\varepsilon }}{\nu_{j}^{d_1/4+3\varepsilon }}
\int_{S_j} \prod_{i\neq j}
\big||x'_i|-\nu_{i}\nu_j^{\frac{1}{2}}\big|^{-\frac{1}{2}}\big||x'_j|-\bar{\nu_{j}}\nu_j^{\frac{1}{2}}\big|^{-\frac{1}{2}}
d\nu_1 \ldots d\nu_{d_1}\nonumber\\
&&\leq
C\int_{\max\{(2d_1|x'|)^{2/3},1\}}^{\infty}\frac{\max\{1,|x'|\}^{2\varepsilon }}{\nu_j^{d_1/4+3\varepsilon }}\nu_j^{d_1/4-1}
d\nu_j\nonumber\\
&&\leq
C\int_{\max\{(2d_1|x'|)^{2/3},1\}}^{\infty}\frac{\max\{1,|x'|\}^{2\varepsilon }}{\nu_j^{1+3\varepsilon }}d\nu_j
\leq C.
\end{eqnarray}
\bigskip

If we assume now that  $(|x'|/(2d_1))^{2/3}\leq \nu_j \leq
(2d_1|x'|)^{2/3}$ then by the change of variables $\nu_i
\nu_j^{\frac{1}{2}}=u_i$ one gets 
\begin{eqnarray}
&&\int_{S_j} \prod_{i\neq j}
\big||x'_i|-\nu_{i}\nu_j^{\frac{1}{2}}\big|^{-\frac{1}{2}}\big||x'_j|-\bar{\nu_{j}}\nu_j^{\frac{1}{2}}\big|^{-\frac{1}{2}}
d\nu_1 \ldots d\nu_{j-1}d\nu_{j+1}\ldots d\nu_{d_1}\nonumber\\
&&\leq C\nu_j^{\frac{1-d_1}{2}}
\int_{[0,\nu^{3/2}_j]^{d_1-1}}\big||x'_i|-u_i\big|^{-\frac{1}{2}}\big||x'_j|+\sum_{i\neq
j}u_i-\nu_j^{\frac{3}{2}}\big|^{-\frac{1}{2}}d{\bf u}\nonumber
\end{eqnarray}
where $d{\bf u}=du_1\cdots du_{j-1}du_{j+1}\cdots du_{d_1}$. Hence,
 \begin{eqnarray*}
 &&\int_{(|x'|/(2d_1))^{2/3}}^{(2d_1|x'|)^{2/3}}\frac{\max\{1,|x'|\}^{2\varepsilon }}{\nu_{j}^{d_1/4+3\varepsilon }}
\int_{S_j} \prod_{i\neq j}
\big||x'_i|-\nu_{i}\nu_j^{\frac{1}{2}}\big|^{-\frac{1}{2}}\big||x'_j|-\bar{\nu_{j}}\nu_j^{\frac{1}{2}}\big|^{-\frac{1}{2}}
d\nu_1 \ldots d\nu_{d_1}\nonumber\\
&&\leq
C\int_{(|x'|/(2d_1))^{2/3}}^{(2d_1|x'|)^{2/3}}\nu_j^{\frac{2-3d_1}{4}}
\int_{[0,\nu^{3/2}_j]^{d_1-1}}\big||x'_i|-u_i\big|^{-\frac{1}{2}}\big||x'_j|+\sum_{i\neq
j}u_i-\nu_j^{\frac{3}{2}}\big|^{-\frac{1}{2}}d{\bf u}d\nu_j\nonumber\\
&&\leq
C|x'|^{\frac{2-3d_1}{6}}\int_{[0,2d_1|x'|]^{d_1-1}}\prod_{i\neq
j}\big||x'_i|-u_i\big|^{-\frac{1}{2}}\nonumber\\
&&\quad\quad\quad\quad\quad\quad\times\int_{(|x'|/(2d_1))^{2/3}}^{(2d_1|x'|)^{2/3}}
\big||x'_j|+\sum_{i\neq
j}u_i-\nu_j^{\frac{3}{2}}\big|^{-\frac{1}{2}}d\nu_j d{\bf
u}\nonumber\\
&&\leq C|x'|^{\frac{2-3d_1}{6}}|x'|^{1/6}\prod_{i\neq
j}\int_0^{2d_1|x'|}\big||x'_i|-u_i\big|^{-\frac{1}{2}}du_i\nonumber\\
&&\leq C|x'|^{\frac{2-3d_1}{6}}|x'|^{1/6}|x'|^{(d_1-1)/2}\nonumber\\
&&\leq C.
\end{eqnarray*}
Now \eqref{eeqqq26}, \eqref{eeqq21}, \eqref{eqq21} and
the above estimates  yield 
\begin{eqnarray*}
\sum_{N_{{\bf n}}^{3/2}> |x'|/(2d_1)}
\frac{\max\{1,|x'|\}^{2\varepsilon }}{N_{\bf n}^{d_1/2+3\varepsilon }}
H_{d_1,{\bf n}} \left( \frac{x'}{N_{\bf n}^{1/2}}\right)\leq C.
\end{eqnarray*}

\end{proof}

\bigskip 

Next, for all  $R > 0$ we define the  weight function $w_R \colon  (\R^{d_1}\times \R^{d_2})^2 \to \R_+$ by
the formula 
\begin{equation*}
w_R(x,y) = \min\{R,|y'|^{-1}\} |x'|.
\end{equation*}
The estimates obtained in this section can be summarised in the following proposition.

\begin{proposition}\label{prop3.5}
For all $\gamma \in [ 0,d_2/4 )$ and all bounded compactly supported
Borel functions $F : \R \to \C$,
\[
\left\|\, \Big|\sum_{i=1}^{d_1}|x'_i|\Big|^\gamma K_{F(L)}(\cdot,y)\right\|_2^2 \leq
C_\gamma \int_0^\infty |F(\lambda)|^2 \, \lambda^{(d_1+d_2)/2} \,
\min\{\lambda^{d_2/4-\gamma},|y'|^{2\gamma-d_2/2}\}
\,\frac{d\lambda}{\lambda}
\]
for almost all $y=(y', y'') \in \R^{d_1}\times \R^{d_2}$. In particular, for
all $R
> 0$, if $\supp F \subseteq \left[ R^2,4R^2
\right]$, then
\[
\esssup_{y \in \R^{d_1}\times\R^{d_2}}  |B(y,R^{-1})|^{1/2} \, \|
w_R(\cdot,y)^\gamma K_{F(L)}(\cdot,y)\|_2 \leq C_{\gamma}
\|\delta_{R^2} F\|_{L^2},
\]
where the constant $C_{\gamma}$ does not depend on $R$.
\end{proposition}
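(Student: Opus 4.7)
The first estimate is essentially a clean substitution: start from Proposition~\ref{prop3.3}, apply Lemma~\ref{lem3.4}, and track exponents. In the right-hand side of Proposition~\ref{prop3.3} make the change of variables $z'=\theta^{1/2}y'$ inside the sum, which becomes
\[
\theta^{Q/2-\gamma}\sum_{{\bf n}}\frac{1}{N_{\bf n}^{Q/2-3\gamma}}H_{d_1,{\bf n}}\!\left(\frac{z'}{N_{\bf n}^{1/2}}\right).
\]
Writing $Q/2-3\gamma=d_1/2+3(d_2/4-\gamma)$ and choosing $\varepsilon=d_2/4-\gamma$, which is strictly positive because $\gamma<d_2/4$, Lemma~\ref{lem3.4} bounds this sum by $C\theta^{Q/2-\gamma}\max\{1,\theta^{1/2}|y'|\}^{2\gamma-d_2/2}$. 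Since $2\gamma-d_2/2<0$, a two-case analysis (according as $\theta^{1/2}|y'|\le 1$ or $>1$) together with the identity $Q/2-\gamma=(d_1+d_2)/2+(d_2/4-\gamma)$ shows that this upper bound equals $C\theta^{(d_1+d_2)/2}\min\{\theta^{d_2/4-\gamma},|y'|^{2\gamma-d_2/2}\}$ in both regimes, which is exactly the integrand in the asserted bound.

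For the second inequality, I would first use that $|x'|\sim \sum_i|x'_i|$ to replace $w_R(x,y)^\gamma$ by $(\min\{R,|y'|^{-1}\})^\gamma(\sum_i|x'_i|)^\gamma$ up to constants. Since $\supp F\subseteq[R^2,4R^2]$, on the support $\lambda\sim R^2$, so the first inequality gives
\[
\|(\textstyle\sum_i|x'_i|)^\gamma K_{F(L)}(\cdot,y)\|_2^2\le C R^{d_1+d_2}\min\{R^{d_2/2-2\gamma},|y'|^{2\gamma-d_2/2}\}\,\|\delta_{R^2}F\|_{L^2}^2,
\]
where I used $\int_{R^2}^{4R^2}|F(\lambda)|^2\,d\lambda/\lambda\sim \|\delta_{R^2}F\|_{L^2}^2$. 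Combining with $|B(y,R^{-1})|\sim R^{-(d_1+d_2)}\max\{R^{-1},|y'|\}^{d_2/2}$ from \eqref{eqVr} and again splitting into $|y'|\le R^{-1}$ versus $|y'|>R^{-1}$, the product of $|B(y,R^{-1})|$, $(\min\{R,|y'|^{-1}\})^{2\gamma}$, $R^{d_1+d_2}$, and the minimum collapses to a pure constant independent of $R$ and $y'$, as a direct exponent tally confirms.

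The only real work here is the first step; all the analytic difficulty has been packed into Proposition~\ref{prop2.2} and especially Lemma~\ref{lem3.4}. The delicate point is that the range $\gamma\in[0,d_2/4)$ is precisely the range of admissible $\varepsilon=d_2/4-\gamma>0$ in Lemma~\ref{lem3.4}, and the two terms in the minimum correspond to the two boundary regimes $\max\{1,\theta^{1/2}|y'|\}$ in that lemma's weight. Once this correspondence is spelled out, the second inequality is just bookkeeping against the volume formula \eqref{eqVr} and the definition of $w_R$.
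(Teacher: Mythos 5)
Your proposal is correct and follows essentially the same route as the paper: the first bound comes from Proposition~\ref{prop3.3} combined with Lemma~\ref{lem3.4} applied with $\varepsilon=d_2/4-\gamma$ (your exponent bookkeeping $Q/2-3\gamma=d_1/2+3\varepsilon$ and the two-regime identification of the $\min$ are exactly what the paper leaves implicit), and the second bound then follows from the definition of $w_R$ together with the volume estimate \eqref{eqVr}, just as in the paper's two-sentence proof. Nothing is missing; you have merely made the exponent tally explicit.
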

\begin{proof}
We obtain the first inequality by Proposition \ref{prop3.3}
and Lemma~\ref{lem3.4} with $\varepsilon  = d_2/4-\gamma$. Next if we assume that 
$\supp F \subseteq \left[ R^2,4R^2 \right]$, then in virtue of the definition of the weight 
$ w_R$ and  estimate \eqref{eqVr}, 
the first inequality implies the second one.
\end{proof}

\section{The multiplier theorems}\label{sec4}

In the following section we show that Theorems \ref{thm1m} and \ref{thm2r} are straightforward consequence of
Proposition~\ref{prop3.5}. The argument is essential  the same as in Section 5 of \cite{MaS12} with an obvious 
adjustment of exponents in some calculations and we quote it here for sake of completeness. An alternative proof
based on the wave equation technique can be obtain by a simple modification of the proof of 
\cite[Lemma 3.4]{CS}.

\begin{proposition}\label{stw}
For all $R > 0$, $\alpha \geq 0$, $\beta > \alpha$, and for all functions $F : \R \to \C$ such that $\supp F \subseteq [-4R^2,4R^2]$,
\begin{equation}\label{eqSW}
\esssup_{y \in \R^{d_1}\times\R^{d_2}} |B(y,R^{-1})|^{1/2} \|(1+ R\rho(\cdot,y))^\alpha K_F(\cdot,y)\|_2 \leq C_{\alpha,\beta} \|\delta_{R^2} F\|_{W_\infty^\beta},
\end{equation}
where the constant $C_{\alpha,\beta}$ does not depend on $R$. If in addition  $\beta > \alpha+Q/2$, then
\begin{equation}\label{eqstw}
\esssup_{y \in \R^{d_1}\times\R^{d_2}} \|(1+ R\rho(\cdot,y))^\alpha K_F(\cdot,y)\|_1 \leq C_{\alpha,\beta}
\|\delta_{R^2} F \|_{W_\infty^\beta},
\end{equation}
where again $C_{\alpha,\beta}$ does not depend on $R$.
\end{proposition}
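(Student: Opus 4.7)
The approach is the standard one in spectral multiplier theory for operators with Gaussian heat kernel bounds (see e.g.\ \cite{CS, DOS, heb93}): combine finite propagation speed of the wave equation $\cos(t\sqrt L)$ with the weighted Plancherel estimate of Proposition~\ref{prop3.5} and a dyadic Fourier decomposition of $F$. The Gaussian bounds \eqref{Gauss} imply finite propagation speed, i.e.\
\[
\supp K_{\cos(t\sqrt L)} \subseteq \{(x,y) : \rho(x,y) \leq |t|\},
\]
by a Davies--Gaffney argument, see \cite{CS}.

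To establish \eqref{eqSW}, I would write $F(L) = G(\sqrt L)$ with $G(s) = F(s^2)$, so that $\supp G \subseteq [-2R,2R]$. Fix a smooth dyadic partition of unity $\sum_{j\geq 0}\chi_j \equiv 1$ on $\R$ with $\supp\chi_0 \subseteq \{|t|\leq 2\}$ and $\supp\chi_j \subseteq \{|t|\sim 2^j\}$ for $j\geq 1$, and define $G_j$ by $\widehat{G_j}(t) = \chi_j(tR)\widehat G(t)$. Setting $F_j(\lambda) = G_j(\sqrt\lambda)$, the Fourier inversion formula
\[
G_j(\sqrt L) = \frac{1}{2\pi}\int_\R \chi_j(tR)\widehat G(t)\cos(t\sqrt L)\,dt
\]
together with finite propagation speed forces $K_{F_j(L)}(x,y)$ to be supported in $\{\rho(x,y)\leq C2^j R^{-1}\}$. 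On this set, $(1+R\rho(x,y))^\alpha \leq C2^{j\alpha}$. After a standard cutoff adjustment so that the $F_j$ satisfy the narrow spectral support hypothesis, Proposition~\ref{prop3.5} with $\gamma=0$ yields
\[
|B(y,R^{-1})|^{1/2}\|(1+R\rho(\cdot,y))^\alpha K_{F_j(L)}(\cdot,y)\|_2 \leq C 2^{j\alpha}\|\delta_{R^2}F_j\|_{L^2}.
\]
A Bernstein-type estimate, using that $\widehat{G_j}$ is localized at scale $2^j R^{-1}$ and that $\delta_R G$ and $\delta_{R^2}F$ are related by the smooth non-degenerate substitution $\mu\mapsto \mu^2$ on the support, gives $\|\delta_{R^2}F_j\|_{L^2} \leq C 2^{-j\beta}\|\delta_{R^2}F\|_{W_\infty^\beta}$. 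Summing over $j\geq 0$ then converges exactly when $\beta>\alpha$, proving \eqref{eqSW}.

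For \eqref{eqstw}, I would apply Cauchy--Schwarz to each dyadic piece together with the doubling estimate \eqref{doubling}, exploiting that $K_{F_j(L)}(\cdot,y)$ is supported in the ball $B(y, C2^j R^{-1})$:
\[
\|(1+R\rho(\cdot,y))^\alpha K_{F_j(L)}(\cdot,y)\|_1 \leq C 2^{jQ/2}|B(y,R^{-1})|^{1/2}\|(1+R\rho(\cdot,y))^\alpha K_{F_j(L)}(\cdot,y)\|_2.
\]
The extra factor $2^{jQ/2}$ per piece explains why the geometric sum converges only under the stronger hypothesis $\beta>\alpha+Q/2$.

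The main technical obstacle is a careful bookkeeping of the relationship between the $L^2$ norm of the dyadic pieces $\delta_{R^2}F_j$ and the Sobolev norm $\|\delta_{R^2}F\|_{W_\infty^\beta}$ through the passage from $F$ to $G(s)=F(s^2)$ and the dyadic localization at scale $2^j/R$; however, since the change of variables $\mu\mapsto \mu^2$ is smooth and non-degenerate on the relevant bounded support away from the origin, this step is routine and the bound $\|\delta_{R^2}F_j\|_{L^2}\leq C2^{-j\beta}\|\delta_{R^2}F\|_{W_\infty^\beta}$ follows from standard Fourier-analytic estimates.
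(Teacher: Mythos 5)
Your proposal is essentially correct in outline, but it is not the paper's proof: the paper disposes of Proposition \ref{stw} in one line, observing that the Gaussian bounds \eqref{Gauss}, together with the doubling property \eqref{doubling}, make it a special case of \cite[Lemmas 4.3 and 4.4]{DOS}, whose proofs use a Fourier-integral representation of $F(L)$ in terms of the complex-time semigroup $e^{-(R^{-2}-i\tau)L}$ and weighted $L^2$ bounds for its kernels --- no wave equation, no dyadic decomposition, and no use of Proposition \ref{prop3.5}. What you reconstruct is precisely the alternative route the authors themselves point to (a modification of \cite[Lemma 3.4]{CS}): finite propagation speed for $\cos(t\sqrt L)$ (via Davies--Gaffney, which does follow from \eqref{Gauss}), a dyadic decomposition of $\widehat G$ with $G(s)=F(s^2)$, and Cauchy--Schwarz with \eqref{doubling} on the balls $B(y,C2^jR^{-1})$ to pass from \eqref{eqSW} to \eqref{eqstw}; your exponent count $2^{j(\alpha+Q/2-\beta)}$ reproduces the thresholds $\beta>\alpha$ and $\beta>\alpha+Q/2$ correctly. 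One structural remark: for the $\gamma=0$ input you do not need Proposition \ref{prop3.5} at all; the bound $|B(y,R^{-1})|^{1/2}\|K_{H(L)}(\cdot,y)\|_2\lesssim \|\delta_{R^2}H\|_\infty$ follows from \eqref{Gauss} alone (factor through $e^{-R^{-2}L}$ for compactly supported $H$, or through powers of $(I+R^{-2}L)^{-1}$ for the rapidly decaying pieces), which keeps Proposition \ref{stw} logically independent of the Plancherel machinery, as in the paper.

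The one place where your write-up hides genuine work is the sentence ``after a standard cutoff adjustment so that the $F_j$ satisfy the narrow spectral support hypothesis''. The pieces $F_j(\lambda)=G_j(\sqrt\lambda)$ have band-limited $G_j$, hence are not compactly supported in $\lambda$ at all; moreover the hypothesis here is $\supp F\subseteq[-4R^2,4R^2]$, so even $F$ itself does not satisfy the $[R^2,4R^2]$ support assumption of the second display of Proposition \ref{prop3.5}. If you multiply $F_j$ by a spectral cutoff to restore compact support, you destroy the finite-propagation support of $K_{F_j(L)}$ that you just used to replace $(1+R\rho)^\alpha$ by $2^{j\alpha}$. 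The standard repair --- estimating the cutoff error separately, or working directly with the uncut $F_j$ and checking that the first display of Proposition \ref{prop3.5} (or the Gaussian-bound estimate) extends to rapidly decaying multipliers, with the superpolynomial decay of $F_j$ off $[0,CR^2]$ beating the polynomial growth of the weight $\lambda^{(d_1+d_2)/2-1}\min\{\lambda^{d_2/4},|y'|^{-d_2/2}\}$ --- is routine but must be carried out, and it is exactly where the tails interact with the weight. Similarly, your Bernstein step should go through the observation that $\delta_RG_j$ is the Littlewood--Paley piece of $\delta_RG$ and that composition with the smooth map $s\mapsto s^2$ maps $W_\infty^\beta$ boundedly into itself on compact sets; no non-degeneracy is needed (and none is available, since $\supp F$ may contain $0$), because you only pass from $F$ to $G$, never back.
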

\begin{proof}
Note that the heat kernel of the operator $L$ satisfies Gaussian
bounds \eqref{Gauss} so  Proposition \ref{stw} is a straightforward consequence of 
\cite[Lemmas 4.3 and 4.4]{DOS}.
\end{proof}

Recall that the homogeneous dimension of the ambient space is given by  $Q=d_1+3d_2/2 $.

\begin{lemma}\label{lem4.2}
Suppose that $0 \leq \gamma < \min\{d_1/2,d_2/4\}$ and $\beta > Q/2
- \gamma$. For all $y \in \R^{d_1}\times\R^{d_2}$ and $R > 0$,
\begin{equation}\label{eqVB}
\int_{\R^{d_1}\times\R^{d_2}}(1+w_R(x,y))^{-2\gamma} (1 + R \rho(x,y))^{-2\beta}
\,dx \leq C_{\gamma,\beta} |B(y,R^{-1})|.
\end{equation}
Moreover, for all $x,y \in \R^{d_1}\times\R^{d_2}$ and $R > 0$,
\begin{equation}\label{eqDiB}
w_R(x,y) \leq C (1 + R \rho(x,y)).
\end{equation}
\end{lemma}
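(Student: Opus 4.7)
The plan for \eqref{eqDiB} is to combine the triangle inequality with the distance estimate \eqref{eqCD}. The latter gives $\rho(x,y) \geq c|x'-y'|$ for some $c > 0$, so $|x'| \leq |y'| + |x'-y'| \leq |y'| + C\rho(x,y)$. Multiplying both sides by $\min\{R, |y'|^{-1}\}$ and using $\min\{R, |y'|^{-1}\}|y'| \leq 1$ together with $\min\{R, |y'|^{-1}\} \leq R$ then immediately yields $w_R(x,y) \leq 1 + CR\rho(x,y)$.

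For \eqref{eqVB} the approach will be a dyadic decomposition in the $\rho$-distance. Setting $r_k = 2^k R^{-1}$ and splitting $\R^{d_1}\times\R^{d_2}$ into $B(y, R^{-1})$ together with the annuli $B(y, r_k) \setminus B(y, r_{k-1})$ for $k \geq 1$, the integral in \eqref{eqVB} is controlled by
\[
\sum_{k \geq 0} 2^{-2k\beta} \int_{B(y, r_k)} (1+w_R(x,y))^{-2\gamma} \,dx.
\]
The key reduction is to prove the weighted-volume bound
\[
W(r) := \int_{B(y,r)} (1+w_R(x,y))^{-2\gamma} \,dx \leq C (Rr)^{Q-2\gamma}|B(y, R^{-1})|, \qquad r \geq R^{-1},
\]
after which summation of the resulting geometric series gives \eqref{eqVB} precisely under the hypothesis $\beta > Q/2 - \gamma$.

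To prove the weighted-volume bound the plan is to integrate first over $x''$ for fixed $x'$. Using \eqref{eqCD} one checks that the slice measure satisfies
\[
|\{x'' \in \R^{d_2} : \rho((x',x''),y) \leq r\}| \sim r^{d_2}\max\{|x'|+|y'|, r\}^{d_2/2},
\]
which reduces $W(r)$ to a $d_1$-dimensional integral. One then performs a case analysis on the relative sizes of $r$, $|y'|$, and $R^{-1}$. When $|y'| \geq r$ the region $|x'-y'| \leq Cr$ lies in $|x'| \sim |y'|$ and the weight is essentially $1$; a direct computation gives $W(r) \sim r^{d_1+d_2}|y'|^{d_2/2}$, and the claim reduces to the assumption $\gamma < d_2/4$. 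When $|y'| < r$ one splits further into $|x'| \leq \max\{R^{-1}, |y'|\}$ (where the weight is again $\sim 1$) and $|x'| > \max\{R^{-1}, |y'|\}$ (where the weight behaves as a negative power); the resulting radial integral $\int t^{d_1-1-2\gamma}\,dt$ then controls $W(r)$ exactly when $\gamma < d_1/2$.

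The main obstacle is the bookkeeping in this case analysis: one must handle three or four sub-cases and verify that in each the strict inequalities $\gamma < d_1/2$ and $\gamma < d_2/4$ conspire to produce the exponent $(Rr)^{Q-2\gamma}$ with constants independent of $R$ and $y$. The fact that these two conditions arise in disjoint regimes (the first controlling the radial $x'$-integral when $|y'|$ is small relative to $r$, the second matching the anisotropic volume growth exponent $d_2/2$ when $|y'|$ is large) explains why the hypothesis $\gamma < \min\{d_1/2, d_2/4\}$ is exactly what the weighted Plancherel framework demands.
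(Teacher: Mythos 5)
Your proof of \eqref{eqDiB} is essentially the paper's argument (the paper first rescales to $R=1$ by homogeneity, you argue directly for all $R$; both rest on $\rho(x,y)\gtrsim |x'-y'|$ from \eqref{eqCD} and the triangle inequality). For \eqref{eqVB} your route is correct but genuinely different from the paper's. The paper reduces by the dilations $\delta_t(x',x'')=(tx',t^{3/2}x'')$ and translation to $R=1$, $y''=0$, dominates the weight by $\bigl(1+|x'-y'|/(1+|y'|)\bigr)^{-2\gamma}$, and splits the integral into the two regimes of \eqref{eqCD}, on each choosing a decomposition $\beta=\beta_1+\beta_2$ (with $\beta_1>d_1/2-\gamma$, $\beta_2>3d_2/4$, respectively $\tilde\beta_1>d_1/2+d_2/4-\gamma$, $\tilde\beta_2>d_2/2$) so that the $x''$- and $x'$-integrations decouple; the hypothesis $\beta>Q/2-\gamma$ is exactly what permits these choices, and $\gamma<d_2/4$ absorbs the factor $(1+|y'|)^{2\gamma}$ into $(1+|y'|)^{d_2/2}\sim|B(y,1)|$. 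You instead decompose dyadically in $\rho$ and reduce everything to the weighted volume bound $W(r)\lesssim (Rr)^{Q-2\gamma}|B(y,R^{-1})|$ for $r\ge R^{-1}$, recovering the same threshold $\beta>Q/2-\gamma$ from the geometric series $\sum_k 2^{-2k\beta}2^{k(Q-2\gamma)}$. Your slice-measure formula is consistent with \eqref{eqCD}, and the case analysis does close: for $|y'|\ge r$ the trivial bound $(1+w_R)^{-2\gamma}\le 1$ together with \eqref{eqVr} needs only $2\gamma\le d_2/2$; for $|y'|<r$, with $m=\max\{R^{-1},|y'|\}$ one gets $W(r)\lesssim r^{\,d_1+3d_2/2-2\gamma}m^{2\gamma}$ (the radial integral converging precisely because $\gamma<d_1/2$), and this is $\le (Rr)^{Q-2\gamma}|B(y,R^{-1})|$ since $m\ge R^{-1}$ and $2\gamma\le d_2/2$. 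What each approach buys: yours is modular and scale-free, isolating the geometry in a single volume-growth estimate that would transfer to other weights or metrics with the same growth, and it exhibits $\beta>Q/2-\gamma$ as a pure volume-counting condition; the paper's computation is shorter, avoids dyadic bookkeeping, and exploits the explicit anisotropic product structure of $\rho$ directly, at the cost of the slightly fiddly choices of $\beta_1,\beta_2$.
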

\begin{proof}
By the homogeneity properties of the distance $\rho$ and the weights
$w_R$, we only prove the case $R=1$. For other case, one just dilate
them by $\delta_t(x',x'')=(tx',t^{3/2}x'')$. By
\eqref{eqCD},
\[\min\{1,|y'|^{-1}\}|x'| \leq 1 + |x' - y'| \leq C (1 + \rho(x,y)),\]
which proves \eqref{eqDiB}.

Because of the translation invariance, to prove \eqref{eqVB}, it is  enough to consider  the case $y''=0$. By
\eqref{eqVr} it suffices to show that 
\[
\int_{\R^{d_1}\times\R^{d_2}}\left(1+\frac{|x'-y'|}{1+|y'|}\right)^{-2\gamma} (1 +
\rho(x,y))^{-2\beta} \,dx \leq C_{\gamma,\beta} (1 + |y'|)^{d_2/2}.
\]
Again we split the integral into two parts, according to the asymptotics
\eqref{eqCD}. In the region $X_1 = \{x \in \R^{d_1}\times\R^{d_2} \colon \, 
 |x''| \geq (|x'| + |y'|)^{3/2}\}$, we choose $\beta_1$ and $ \beta_2$   in such a way that   $\beta = \beta_1
+ \beta_2$,  $\beta_1 > d_1/2-\gamma$ and $\beta_2 > 3d_2/4$.
Then  
\begin{eqnarray*}
\int_{X_1}\left(1+\frac{|x'-y'|}{1+|y'|}\right)^{-2\gamma} (1 +
\rho(x,y))^{-2\beta} \,dx
\\
 \le C(1+|y'|)^{2\gamma} \int_{\R^{d_1}} (1+|x'-y'|)^{-2(\gamma+\beta_1)}
\,dx' \int_{\R^{d_2}} (1+|x''|^{2/3})^{-2\beta_2} \,dx''
\\ \le C_{\gamma,\beta} (1 + |y'|)^{d_2/2}.
\end{eqnarray*}
In the region $X_2 = \{x \in {\R^{d_1}\times\R^{d_2}}\colon \,  |x''| < (|x'| +
|y'|)^{3/2}\}$, instead,  we choose $\beta_1$ and $ \beta_2$   in such a way $\beta =
\tilde\beta_1+\tilde\beta_2$, $\tilde\beta_1 >
d_1/2+d_2/4-\gamma$ and $\tilde\beta_2 > d_2/2$.  Then the integral over
 $X_2$ is estimated by 
\[\begin{split}
\int_{\R^{d_1}\times\R^{d_2}}&\left(1+\frac{|x'-y'|}{1+|y'|}\right)^{-2\gamma} (1+|x'-y'|)^{-2\tilde\beta_1}
\left(1 + \frac{|x''|}{(|x'|+|y'|)^{1/2}}\right)^{-2\tilde\beta_2} \,dx \\
&\leq C_{\gamma,\beta} \int_{\R^{d_1}} \left(1+\frac{|u|}{1+|y'|}\right)^{-2\gamma}
 (1+|u|)^{-2\tilde\beta_1} (|u+y'|+|y'|)^{d_2/2} \,du \\
&\leq C_{\gamma,\beta} \left( (1+|y'|)^{2\gamma} \int_{\R^{d_1}}
(1+|u|)^{-2\nu} \,du + |y'|^{d_2/2} \int_{\R^{d_1}}
(1+|u|)^{-2\tilde\beta_1} \,du \right),
\end{split}\]
where $\nu = \tilde\beta_1 + \gamma - d_2/4 > d_1/2$. The conclusion
follows.
\end{proof}

\begin{proposition}\label{prop4.3}
For all $R > 0$, $\alpha \geq 0$, $\beta > \alpha$, $\gamma \in [0,d_2/4)$, and for all functions $F : \R \to \C$ such that $\supp F
\subseteq \left[ R^2,4R^2 \right]$,
\begin{eqnarray*}
\esssup_{y \in \R^{d_1}\times\R^{d_2}}  |B(y,R^{-1})|^{1/2} \, \| (1 + R\rho(\cdot,y))^\alpha (1+w_R(\cdot,y))^\gamma K_{F(L)}(\cdot,y)\|_2 
\leq C_{\alpha,\beta,\gamma} \|\delta_{R^2} F\|_{W_2^\beta},
\end{eqnarray*}
where the constant $C_{\alpha,\beta,\gamma}$ does not depend on $R$.
\end{proposition}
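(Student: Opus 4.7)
The plan is to combine Proposition~\ref{prop3.5} (which delivers the $w_R^\gamma$-weighted Plancherel bound with an unweighted $L^2$ norm on the right) with the finite propagation speed of $\cos(t\sqrt{L})$ to upgrade the estimate to include the full weight $(1+R\rho)^\alpha(1+w_R)^\gamma$. Since $\rho$ in \eqref{eqCD} is the sub-Riemannian distance associated to $L$, for any even function $G$ with $\supp \widehat{G} \subseteq [-T,T]$ the operator $G(\sqrt{L}) = (2\pi)^{-1/2}\int \widehat{G}(t)\cos(t\sqrt{L})\,dt$ has kernel vanishing outside $\{(x,y) : \rho(x,y) \leq T\}$.

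I would write $\tilde{F} = \delta_{R^2}F$ (supported in $[1,4]$) and $\tilde{G}(\mu) = \tilde{F}(\mu^2)$ (even, supported in $\{1 \leq |\mu| \leq 2\}$), then take a smooth even dyadic partition of unity $\{\chi_k\}_{k \geq 0}$ on $\R$ with $\chi_0$ supported in $\{|t| \leq 2\}$ and $\chi_k$ in $\{2^{k-1} \leq |t| \leq 2^{k+1}\}$ for $k \geq 1$. Setting $\widehat{\tilde{G}_k} = \chi_k\widehat{\tilde G}$ gives $\tilde G = \sum_k \tilde G_k$; after rescaling $G_k(\mu) = \tilde{G}_k(\mu/R)$, the function $F_k(\lambda) := G_k(\sqrt\lambda) = \tilde G_k(\sqrt\lambda/R)$ satisfies $F = \sum_k F_k$ and $F_k(L) = G_k(\sqrt{L})$ has kernel supported in $\{(x,y):\rho(x,y) \leq C 2^k/R\}$ by finite propagation speed.

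On the support of $K_{F_k(L)}(\cdot,y)$ one has $(1+R\rho(x,y))^\alpha \leq C 2^{k\alpha}$, and $(1+a)^\gamma \leq C_\gamma(1+a^\gamma)$, so applying the second inequality of Proposition~\ref{prop3.5} for both exponents $0$ and $\gamma$ gives
\begin{equation*}
\| (1+R\rho(\cdot,y))^\alpha (1+w_R(\cdot,y))^\gamma K_{F_k(L)}(\cdot,y) \|_2 \leq C\,2^{k\alpha} |B(y,R^{-1})|^{-1/2} \|\delta_{R^2} F_k\|_2.
\end{equation*}
Since $\delta_{R^2} F_k(\lambda) = \tilde G_k(\sqrt\lambda)$ lives essentially on $[1,4]$, the substitution $\mu = \sqrt\lambda$ combined with Plancherel and the dyadic Fourier localization of $\tilde G_k$ yield
\begin{equation*}
\|\delta_{R^2}F_k\|_2 \leq C\|\tilde G_k\|_2 \leq C 2^{-k\beta}\|\tilde G\|_{W_2^\beta} \leq C 2^{-k\beta}\|\delta_{R^2}F\|_{W_2^\beta},
\end{equation*}
the final inequality using that $\lambda \mapsto \sqrt\lambda$ is a smooth diffeomorphism on the compact interval $[1,4]$, so the $W_2^\beta$ norms of $\tilde F$ and $\tilde G$ are comparable. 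Summing in $k \geq 0$ produces a geometric series in $2^{\alpha-\beta}$, convergent precisely when $\beta > \alpha$, which yields the claim.

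The main obstacle is the mismatch between the two natural frames: Proposition~\ref{prop3.5} is formulated in the spectral variable $\lambda$ of $L$, while finite propagation speed forces a decomposition in the variable dual to $\mu = \sqrt{\lambda}$. In particular, after the Fourier cutoff $\chi_k$, the piece $F_k$ is no longer exactly supported in $[R^2,4R^2]$, and the Schwartz tails must be absorbed by the decay of $\check\chi_k$; simultaneously, the $W_2^\beta$ norms across the substitution $\mu = \sqrt\lambda$ must be compared with constants independent of $R$. As the authors remark after Proposition~\ref{prop3.5}, one can sidestep this matching entirely by adapting the wave equation argument of \cite[Lemma 3.4]{CS}.
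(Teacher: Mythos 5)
Your proposal takes a genuinely different route from the paper. The paper's proof is a two-endpoint interpolation: the endpoint $\alpha=0$ (any $\beta>0$) is exactly Proposition~\ref{prop3.5}; the endpoint $\beta>\alpha+d_2/2+1/2$ follows from the Gaussian-bound estimate \eqref{eqSW} together with $w_R\le C(1+R\rho)$ (that is, \eqref{eqDiB}) and a Sobolev embedding $W_2^\beta\hookrightarrow W_\infty^{\beta-1/2-\epsilon}$; interpolation of analytic families (Cwikel--Janson, as in \cite[Lemma 4.3]{DOS}) then covers the whole range $\beta>\alpha$. Your finite-propagation-speed decomposition is the ``alternative proof based on the wave equation technique'' the authors allude to at the start of Section~\ref{sec4}; it avoids complex interpolation at the price of delicate bookkeeping between the spectral variables $\lambda$ and $\mu=\sqrt\lambda$, whereas the paper's argument is shorter because both endpoints are immediate.

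There is, however, a concrete gap that you flag but do not close. After the Fourier cutoff $\widehat{\tilde G_k}=\chi_k\widehat{\tilde G}$, the piece $F_k$ is not supported in $[R^2,4R^2]$ (indeed $\tilde G_k$ is the convolution of $\tilde G$ with the rapidly decreasing inverse Fourier transform of $\chi_k$, hence real-analytic), so neither inequality of Proposition~\ref{prop3.5} applies as stated: the first requires compactly supported $F$, the second requires $\supp F\subseteq[R^2,4R^2]$. Consequently the central display, in which you bound $\|(1+R\rho(\cdot,y))^\alpha (1+w_R(\cdot,y))^\gamma K_{F_k(L)}(\cdot,y)\|_2$ by $C\,2^{k\alpha}|B(y,R^{-1})|^{-1/2}\|\delta_{R^2}F_k\|_2$, is not a direct consequence of Proposition~\ref{prop3.5}. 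To repair this one must extend the first inequality of Proposition~\ref{prop3.5} to rapidly decreasing $F$ by truncation, and then control both the contribution of the spectral tail of $F_k$ outside $[R^2,4R^2]$ and the comparison $\|\delta_{R^2}F_k\|_{L^2}\le C\|\tilde G_k\|_{L^2}$ (which, after the substitution $\theta=\mu^2$, involves an unbounded weight $\mu\,d\mu$ over the now unbounded support of $\tilde G_k$), using the rapid decay $|\tilde G_k(\mu)|\le C_N\,2^{-kN}(1+|\mu|)^{-N}$ for $|\mu|\ge 3$ that follows from the compact support of $\tilde G$. All of this can be done --- it is essentially the content of \cite[Lemma 3.4]{CS} --- but it is where the real work lies, and merely naming it is not yet a proof.
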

\begin{proof}
The estimate \eqref{eqSW}, together with \eqref{eqDiB} and a Sobolev embedding,
immediately implies Proposition \ref{prop4.3} in the case $\beta > \alpha + d_2/2 + 1/2$.
On the other hand, in the case $\alpha = 0$, Proposition \ref{prop4.3} follows from 
 Proposition~\ref{prop3.5} for all $\beta > 0$. We obtain now Proposition \ref{prop4.3} for the whole range of 
 exponents  by interpolation
 (see \cite{CJ} and  also \cite[Lemma 4.3]{DOS} for similar methods).
\end{proof}

For the purpose of the next statement we set $D = Q - \min\{d_1,d_2/2\} = \max\{d_1+d_2,3d_2/2\}$.

\begin{coro}\label{coro1}
For all $R > 0$, $\alpha \geq 0$, $\beta > \alpha + D/2$, and for all functions $F : \R \to \C$ such that $\supp F \subseteq \left[ R^2,4R^2 \right]$,
\begin{equation}\label{eqcoro1}
\esssup_{y \in \R^{d_1}\times\R^{d_2}} \| (1 + R\rho(\cdot,y))^\alpha K_{F(L)}(\cdot,y)\|_1
\leq C_{\alpha,\beta} \|\delta_{R^2} F\|_{W_2^\beta},
\end{equation}
where the constant $C_{\alpha,\beta}$ does not depend on $R$. In particular, under the same hypotheses,
\begin{equation}\label{eqcoro1b}
\esssup_{y \in \R^{d_1}\times\R^{d_2}} \int_{{\R^{d_1}\times\R^{d_2}}\setminus B(y,r)} |K_{F(L)}(x,y)| \,dx
\leq C_{\alpha,\beta} (1+rR)^{-\alpha} \|\delta_{R^2} F \|_{W_2^\beta}.
\end{equation}
\end{coro}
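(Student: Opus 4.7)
The plan is to derive \eqref{eqcoro1} from Proposition~\ref{prop4.3} and Lemma~\ref{lem4.2} via Cauchy--Schwarz, inserting an auxiliary weight $(1+w_R(\cdot,y))^\gamma$ so that the $L^2$--bound on the kernel can absorb the volume factor $|B(y,R^{-1})|^{1/2}$ appearing in Proposition~\ref{prop4.3}. Concretely, for parameters $\gamma \in [0,d_2/4)$ and $\alpha' \geq 0$ to be chosen, I would factor
\[
(1+R\rho(x,y))^\alpha |K_{F(L)}(x,y)| = \Bigl[(1+R\rho(x,y))^{\alpha+\alpha'}(1+w_R(x,y))^\gamma |K_{F(L)}(x,y)|\Bigr] \cdot \Bigl[(1+R\rho(x,y))^{-\alpha'}(1+w_R(x,y))^{-\gamma}\Bigr]
\]
and apply Cauchy--Schwarz in $x$.

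The first factor, in $L^2(dx)$, is controlled by Proposition~\ref{prop4.3} as long as $\beta > \alpha+\alpha'$, yielding a bound of the form $C|B(y,R^{-1})|^{-1/2}\|\delta_{R^2}F\|_{W_2^\beta}$. The second factor, in $L^2(dx)$, is controlled by Lemma~\ref{lem4.2} provided $\gamma < \min\{d_1/2,d_2/4\}$ and $\alpha' > Q/2-\gamma$, yielding a bound of the form $C|B(y,R^{-1})|^{1/2}$. Multiplying, the volume factors cancel and one gets the desired $L^1$ bound under the combined constraint $\beta > \alpha + \alpha' > \alpha + Q/2 - \gamma$. To optimise this, I would push $\gamma$ arbitrarily close to $\min\{d_1/2,d_2/4\}$, so that the threshold becomes $\beta > \alpha + Q/2 - \min\{d_1/2,d_2/4\}$. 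The one computation worth checking is the identity
\[
\tfrac{Q}{2} - \min\{\tfrac{d_1}{2},\tfrac{d_2}{4}\} = \tfrac{1}{2}\bigl(Q-\min\{d_1,d_2/2\}\bigr) = \tfrac{D}{2},
\]
which matches precisely the hypothesis $\beta > \alpha + D/2$. This is the only non-trivial point, but it is routine.

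For \eqref{eqcoro1b}, the plan is to apply \eqref{eqcoro1} and observe that $\rho(x,y)\geq r$ on the complement of $B(y,r)$, so that $(1+R\rho(x,y))^{-\alpha}\leq (1+Rr)^{-\alpha}$ there. Therefore
\[
\int_{\R^{d_1}\times\R^{d_2}\setminus B(y,r)} |K_{F(L)}(x,y)|\,dx \leq (1+Rr)^{-\alpha}\int (1+R\rho(x,y))^\alpha |K_{F(L)}(x,y)|\,dx,
\]
and the right-hand side is dominated by $C_{\alpha,\beta}(1+rR)^{-\alpha}\|\delta_{R^2}F\|_{W_2^\beta}$ thanks to \eqref{eqcoro1}.

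I do not expect a serious obstacle: both ingredients (Proposition~\ref{prop4.3} and Lemma~\ref{lem4.2}) are already in place, and the proof is essentially a weighted Cauchy--Schwarz argument in which the key insight is the balancing of weights so that $|B(y,R^{-1})|^{1/2}$ from Proposition~\ref{prop4.3} is compensated by $|B(y,R^{-1})|^{1/2}$ from Lemma~\ref{lem4.2}. The only arithmetic to keep track of is verifying that the constraints $\gamma<\min\{d_1/2,d_2/4\}$ and $\alpha'>Q/2-\gamma$ together yield the sharp threshold $D/2$.
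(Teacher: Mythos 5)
Your argument is correct and is exactly the paper's proof: the paper also deduces \eqref{eqcoro1} from Proposition~\ref{prop4.3} together with \eqref{eqVB} and H\"older's (Cauchy--Schwarz) inequality, with the weight exponents balanced so that $\beta>\alpha+Q/2-\min\{d_1/2,d_2/4\}=\alpha+D/2$, and \eqref{eqcoro1b} follows as you say by bounding $(1+R\rho(x,y))^{-\alpha}\leq(1+rR)^{-\alpha}$ off $B(y,r)$. No gaps; your check of the identity $Q/2-\min\{d_1/2,d_2/4\}=D/2$ is the same arithmetic the paper relies on.
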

\begin{proof}
Corollary \ref{coro1} follows form Proposition~\ref{prop4.3}, together with
\eqref{eqVB} and H\"older's inequality.
\end{proof}

We are finally able to prove our main results.

\begin{proof}[Proofs of Theorems~\ref{thm1m} and \ref{thm2r} ]
To prove Theorem~\ref{thm1m} We can follow the lines of the proof of \cite[Theorem 3.1]{DOS},
where the inequality (4.18) there is replaced by our
\eqref{eqcoro1b}. Next We can use that same argument as in  \cite[Section 6]{DOS}
to conclude the proof of Theorem~\ref{thm2r}, see also \cite{MaS12}. 
\end{proof}

\section{Final remarks}\label{sec5}

The natural open problem related to the sharp spectral multiplier results which we prove in this paper is to extend them 
to the class of all  operators $L_\sigma$ defined by \eqref{eq1} for  $\sigma >0$. Another interesting problem which arises
is to obtain possible precise description of the spectral decompositions of operators $L_\sigma$. 

Now we shall  show that, if $d_1 \ge d_2/2$, then the result in Theorem~\ref{thm1m} is sharp. More precisely, if $d_1 \ge d_2/2$ and $s<D/2=(d_1+d_2)/2$, then the  weak type $(1,1)$ estimates in Theorem~\ref{thm1m} cannot hold. Indeed, if we consider the functions
$H_t(\lambda)=\lambda^{it}$, then, for $t>1$, and any $\eta\in C^\infty_c(\R_+)$
\[
 \|\eta H_t \|_{W_2^s} \sim t^s.
\]
On the other hand, we make the following observation.
\begin{proposition}
Suppose that $L$ is the Grushin operator acting on $X = \R^{d_1}\times \R^{d_2}$.
Then the following lower bounds holds:
\[
\|H_t(L)\|_{L^1\to L^{1,w}}=\|L^{it}\|_{L^1\to L^{1,w}} \ge C (1+|t|)^{(d_1+d_2)/2}
\]
for all $t>0$.
\end{proposition}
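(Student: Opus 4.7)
The plan is to transplant the classical sharp lower bound $\|(-\Delta_{\R^n})^{it}\|_{L^1\to L^{1,w}} \gtrsim (1+|t|)^{n/2}$ for imaginary powers of the Euclidean Laplacian (H\"ormander) from Euclidean space to the Grushin setting. Fix a point $(x_0', x_0'')$ with $R = |x_0'|$ very large, so that on a small neighborhood of $(x_0', x_0'')$ the variable coefficient $\sum_j |x_j'|$ is approximately constant equal to $R$ and $L$ is therefore close to the constant-coefficient operator $L_R = -\Delta_{x'} - R \Delta_{x''}$. Under the change of variables $y' = x' - x_0'$, $y'' = (x''-x_0'')/\sqrt R$, which defines an $L^2$-isometry $U g(y) = R^{d_2/4} g(x_0' + y', x_0'' + \sqrt R y'')$, the operator $L_R$ is conjugate to precisely the standard Laplacian $-\Delta$ on $\R^{d_1+d_2}$. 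A direct calculation shows that both $\|\cdot\|_{L^1}$ and $\|\cdot\|_{L^{1,w}}$ scale by the common factor $R^{-d_2/4}$ under $U$, so the weak-$(1,1)$ operator norm of a spectral multiplier is preserved by this conjugation.

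Next I would use finite propagation speed for the wave equations of $L$ and $L_R$ (a consequence of the Gaussian bounds \eqref{Gauss}; see \cite{CS}) together with a Mellin representation of the form $\lambda^{it} \eta(\lambda) = \int_{\R} m_t(s) \cos(s\sqrt\lambda) \, ds$, where $\eta$ is a smooth frequency cutoff, to show that on $L^1$-normalized test functions supported in a small subball of the neighborhood, with $R$ chosen sufficiently large compared to $|t|$, the identity $L^{it} f = U^{-1}(-\Delta_{\R^{d_1+d_2}})^{it} U f$ holds modulo a negligible error. Taking a test function $\tilde f \in L^1(\R^{d_1+d_2})$ with $\|\tilde f\|_1 = 1$ and $\|(-\Delta)^{it}\tilde f\|_{L^{1,w}} \gtrsim (1+|t|)^{(d_1+d_2)/2}$ (from the H\"ormander bound applied with $n = d_1+d_2$), and setting $f = U^{-1}\tilde f$, I would then conclude $\|L^{it} f\|_{L^{1,w}(dx)}/\|f\|_{L^1(dx)} \ge c(1+|t|)^{(d_1+d_2)/2}$, which yields the stated inequality.

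The main obstacle is the transplantation step: quantitatively controlling the difference $L^{it} - U^{-1}(-\Delta)^{it} U$ on compactly supported bumps. Finite propagation for the wave equation gives the identity $\cos(s\sqrt L) f = U^{-1}\cos(s\sqrt{-\Delta}) U f$ only for $|s|$ bounded by the Riemannian radius of the neighborhood on which $L$ essentially coincides with $L_R$. One must therefore choose $R$ as a sufficiently large function of $|t|$ so that the effective support of $m_t$ in the Mellin representation of $L^{it}$ fits inside this radius, after which the remaining error contributions become controllable through the standard wave-equation techniques familiar from the sharp spectral multiplier literature (compare \cite{MaS12, MS94}).
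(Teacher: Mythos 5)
Your overall transference idea (exploit ellipticity at a point with $x'\neq 0$ and reduce to the Euclidean bound in dimension $d_1+d_2$) is the right heuristic, and your computation that the conjugation $U$ rescales $\|\cdot\|_{L^1}$ and $\|\cdot\|_{L^{1,w}}$ by the same factor $R^{-d_2/4}$ while intertwining $L_R$ with $-\Delta_{\R^{d_1+d_2}}$ is correct. But the central step of your argument has a genuine gap. Finite propagation speed gives the \emph{exact} identity $\cos(s\sqrt L)f=\cos(s\sqrt{L_R})f$ only when the coefficients of $L$ and $L_R$ literally coincide on the domain of dependence of $\supp f$; here the coefficient $\sum_j|x'_j|$ is nowhere locally constant, so no such identity holds on any open set, however large $R$ is. What one actually gets is an approximate equality that must be quantified, e.g.\ by Duhamel/energy estimates, and the resulting error involves $\|(L-L_R)\cos(u\sqrt{L_R})Uf\|_2\sim \delta\,\|\Delta_{x''}\,Uf\|_2$, i.e.\ two derivatives of the test bump. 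Since the Euclidean lower bound forces bumps at scale $\lesssim (1+|t|)^{-1}$ and propagation times up to the size of the comparison region, this error grows polynomially in $|t|$ and must be beaten by choosing $R$ as an explicit function of $|t|$ --- none of which is carried out; you name this as ``the main obstacle'' but then assert rather than prove that it is controllable. A second missing piece: $\lambda\mapsto\lambda^{it}$ is not compactly supported, so a single cutoff $\eta$ in the cosine-transform representation does not represent $L^{it}$; a full dyadic decomposition is needed, and the pieces at frequencies $\lesssim 1+|t|$ have effective propagation time exceeding the neighbourhood where $L\approx L_R$. Their contribution cannot be dismissed by the trivial kernel bound (which sums to $(1+|t|)^{d_1+d_2}$, swamping the target $(1+|t|)^{(d_1+d_2)/2}$), so a separate argument for the low-frequency part is required. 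As written, the proof therefore does not go through, although the strategy is plausibly completable with substantial extra work in the spirit of transplantation arguments.

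For comparison, the paper's proof avoids the wave equation entirely: since $L$ is elliptic near any $y$ with $y'\neq 0$, a frozen-coefficient heat parametrix gives, for $t\in(0,1)$ and $x$ near $y$,
\begin{equation*}
\bigl|p_t(x,y)-|y'|^{-d_2}(4\pi t)^{-(d_1+d_2)/2}e^{-\rho(x,y)^2/4t}\bigr|\le C\,t^{1/2}\,t^{-(d_1+d_2)/2},
\end{equation*}
and then the subordination argument of \cite{SW}, which writes $L^{it}$ as a Mellin integral of the heat semigroup, converts this small-time asymptotic directly into the lower bound $\|L^{it}\|_{L^1\to L^{1,w}}\ge C(1+|t|)^{(d_1+d_2)/2}$. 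This route replaces your quantitative operator comparison by a kernel asymptotic for $e^{-tL}$ at a single elliptic point, which is where the Gaussian bound \eqref{Gauss} and ellipticity do all the work; if you want to salvage your approach, the cleanest fix is to switch to this heat-kernel comparison rather than a wave-equation transplantation.
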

\begin{proof}
Because the Grushin operator is elliptic on $X_0 = \{ x \in {\R^{d_1}\times\R^{d_2}}\colon \,  x' \neq 0\}$, one can use the same argument as in \cite{SW} to prove that, for all $y \in X_0$,
\[
|p_t(x,y) - |y'|^{- d_2}(4\pi
t)^{-(d_1+d_2)/2}{\rm e}^{-\rho(x,y)^2/4t}|
 \le C
t^{1/2}t^{-(d_1+d_2)/2}
\]
for all $x$ in a small neighbourhood of $y$ and all $t \in (0,1)$. Here $p_t=K_{\exp(-tL)}$ is the heat kernel corresponding to the Grushin operator. The rest of the argument is the same as in \cite{SW}, so we skip it here.
\end{proof}

To show that  Theorems~\ref{thm1m} and \ref{thm2r} are sharp one can also use the results described 
in \cite{Ke}.

 \bigskip

\noindent \noindent {\bf Acknowledgements:} This project was
supported by Australian Research Council Discovery grants
DP110102488.

\medskip

\noindent

\end{document}